%
%
\documentclass[11pt]{amsart}
\usepackage{amsmath}
\usepackage{amssymb}
\usepackage{amsfonts}
\usepackage{amsthm}
\usepackage{verbatim}
\usepackage{amscd}
\usepackage{cite}
\usepackage{leftidx}
\usepackage{enumerate}
\usepackage{txfonts}
\usepackage{manfnt}
\usepackage{amscd}
\usepackage{mathpazo}
\usepackage[mathscr]{eucal}
\usepackage{xr}
\externaldocument[ech-]{echelon}
\usepackage{hyperref}
\oddsidemargin -.3in\textwidth 6.4in\evensidemargin-.3in


\hfuzz5pt 


\def\inv{^{-1}}

\DeclareMathOperator{\codim}{codim}

\def\ct{\mathrm{ct}}
\def\rms{\mathrm{s}}

 \DeclareMathOperator{\Hom}{Hom}

\def\pp{\mathbb P}

\def\refp #1.{(\ref{#1})}

\newcommand{\A}{\mathcal{A}}

\def\sbr #1.{^{[#1]}}
\def\sfl #1.{^{\lfloor #1\rfloor}}

\newcommand\red{{\mathrm red}}

\def\inv{^{-1}}
\def\?{{\bf{??}}}

\def\dgla{differential graded Lie algebra\ }

\def\A{\Bbb A}

\def\C{\mathbb C}
\def\P{\mathbb P}

\def\sym{\text{\rm Sym} }

\def\Spec{\text{\rm Spec} }

\def\ss{\vskip.12in}

\def\L{\mathcal L}

\def\O{\mathcal O}

\def\g{\mathfrak g}

\def\1/2{\frac{1}{2}}

\def\I{\mathcal{ I}}

\def\im{\text{im}}

\def\2{{[2]}}
\def\l{\ell}

\def\<{\langle}
\def\>{\rangle}
\def\sela{SELA\ }

\def\im{\text{im}}
\def\2{{[2]}}
\def\l{\ell}

\def\scl #1.{^{\lceil#1\rceil}}
\def\spr #1.{^{(#1)}}
\def\sbc #1.{^{\{#1\}}}
\def\supp{\text{supp}}
\def\subpr#1.{_{(#1)}}

\renewcommand{\theequation}{\arabic{section}.\arabic{subsection}.\arabic{equation}}
\def\beq{\begin{equation*}}
\def\eeq{\end{equation*}}

\def\g3{{\Gamma\spr 3.}}
\def\gg{{\Gamma\spr 2.}}

\def\gg{\mathbb G}

\def\Def{\mathrm {Def}}

\newcommand{\eqspl}[2]{
\begin{equation}\label{#1}
\begin{split}
#2\end{split}\end{equation}}
\newcommand{\eqsp}[1]{\begin{equation*}
\begin{split}#1\end{split}\end{equation*}}

\newcommand{\exseq}[3]{
0\to #1\to #2\to #3\to 0
}
\newcommand{\beginalphaenum}{
\begin{enumerate}\renewcommand{\labelenumi}{ }
\item \begin{enumerate}
}

\def\eex{\end{rm}\end{example}}



\newtheorem{thm}{Theorem}  [section]

\newtheorem*{thm*}{Theorem}
\newtheorem*{prop*}{Proposition}
\newtheorem{cor}[thm]{Corollary}
\newtheorem*{cor*}{Corollary}

\newtheorem{lem}[thm]{Lemma}
\newtheorem{lem*}{Lemma}

\newtheorem*{claim*}{Claim}
\newtheorem{prop}[thm]{Proposition}

\newtheorem{defn}[thm]{Definition}
\theoremstyle{remark}

\newtheorem{rem}[thm]{Remark}
\newtheorem{crit-rem}[thm]{Critical remark}

\newtheorem{example}[thm]{Example}
\newtheorem*{example*}{Example}

\newtheorem*{defn*}{Definition}

\pagestyle{plain}

\title { Unobstructedness of filling secants\\ and the Gruson-Peskine general projection theorem }
\normalsize
\author
{Ziv Ran}

\renewcommand{\theequation}
{\arabic{equation}}

\begin{document}

\pagestyle{plain}

\date {\today}
\address{\tiny  {\newline Ziv Ran \newline University of California
Mathematics Department\newline Big Springs  Surge Facility
\newline Riverside CA 92521
\newline ziv.ran @ ucr.edu}}
 \subjclass{14N05}\keywords{multisecant lines, rational curves, generic projections, multiple points}
\thanks{arxiv.org/1302.0824}

\begin{abstract}
We prove an unobstructedness result for deformations of subvarieties constrained
by intersections with another,  fixed  subvariety. We deduce smoothness and expected-dimension results
for multiple-point loci of generic projections, mainly from a point or a line, or for fibres of embedding dimension
2 or less. 
\end{abstract}
\maketitle
The study of linear projections of a smooth projective variety $X\subset\P^N$, and the closely related study of multisecant
spaces of $X$, have occupied projective geometers for generations (see e.g. \cite{zak}).
Though many pathologies are possible for multisecant spaces,
projections from a generic center $\Lambda$, or equivalently the multisecant spaces containing $\Lambda$,
seem not to be afflicted by them, at least when $\Lambda$ is a point. This 
has led to the formulation of a folklore 'generic projection conjecture'
(about which we first heard from R. Lazarsfeld \emph{ca.} 1990): the projection
of $X$ from a generic point $p\in\P^N$ has only the 'expected' singularities (see below).
This statement is equivalent to one about families of secant lines to $X$ satisfying contact conditions,
filling up the ambient $\P^N$.  After numerous 
partial results including \cite{mather}, \cite{alzati-ottaviani},\cite{(n+2)sec}, \cite{beis}, 
the conjecture was recently proven by Gruson and Peskine \cite{grp} (which the reader
may consult  for further
introductory comments,   references, as well as examples and applications).\par
In this paper we will prove 3 results, each extending the Gruson-Peskine
Generic Projection Theorem: \par
-  Theorem \ref{point-projection-thm} is a version for arbitrary 
ambient spaces in place of $\P^N$,  in the form of a general result about  deformations of
rational curves on varieties constrained by contact conditions with a fixed subvariety: it is
shown that these deformations are well-behaved 
(unobstructed, of the expected dimension) provided they fill up the ambient space . 
\par - Theorem \ref{line-projection-thm}  
is a generic projection theorem in $\P^N$  (though without contact conditions)
for projections from arbitrary-dimension centers and their
fibres which have local embedding dimension 2, which includes all fibres
in the case of
projections from a generic \emph{line}. 
\par- Theorem \ref{curvilinear-thm} is a full generic projection theorem in $\P^N$, 
contact conditions included, for the case of curvilinear fibres (and arbitrary-dimension centers).\par
The difference between our approach and that of Gruson and Peskine  is, in a word, cohomology (albeit, of the most elementary kind). We will develop some deformation theory for secants to $X$,
then apply the 'filling' hypothesis to show that obstructions-
even obstruction \emph{groups}-  vanish.
Thus in outline, the proof of each of these theorems follows the same overall patern:\par
- the secant or contact conditions are  analyzed locally and encoded in certain secant or contact
\emph{sheaves} $M$ on the secant plane or curve $L$, which control the corresponding deformations;\par
- the filling hypothesis implies a certain generic spannedness
property for $M$;\par
- $L$ is isomorphic to $\P^\l$, often even $\P^1$, the generic spannedness implies
spannedness and the vanishing of higher cohomology, whence well-behaved deformations.\par
This note grew out of an attempt to understand the exciting result of Gruson and Peskine \cite{grp},
  viewed as a statement about multisecant lines which are mobile enough to go through
  a generic point of the ambient space.
  Though our proof is independent of 
  \cite{grp} and the connection to it may be hidden, it is nonetheless fundamental.
  It occurs above all through  the following heuristic idea gleaned from \cite{grp}:
  \ss
  {\textbf{Uniformity principle:}}{\textit{\ The family of secants to $X$ behaves in the same way at all points off $X$}} \ss 
  For example: if $L$ is postulated to contain a $k$-tuple $Z\subset X$ and  $p\notin Z$, we may consider whether under infinitesimal motion of $L$ as $k$-secant to $X$, and $p$ with it,
  $p$ fills up (the tangent space of) the ambient projective space. The uniformity principle says that the answer
  is \emph{independent} of the point $p$. This is a shockingly powerful conclusion: it implies 
  inter alia that if $k$-secants to $X$ fill up the ambient space, they cannot all meet another variety of codimension $>1$,
  and cannot meet $X$ itself in $>k$ points. This is quite close to saying that the filling $k$-secants of $X$ 
  are well-behaved.\par
  A (future ?) Google-type search on 'uniformity principle' in Algebraic Geometry is likely to bring up 
  prominently the fact  that on a $\P^1$,
  a generically spanned vector  bundle is spanned everywhere (parenthetically, such bundles also have no higher cohomology).
Our idea is that the two uniformity principles are related, the first
being ultimately a consequence of the second.
 Applying this idea to the bundles and sheaves occurring
 in the deformation theory of secants is  the basis for our 
 cohomological/deformation-theoretic approach outlined above.\par
The paper is organized as follows.In \S 1 we give some precise definitions of secants and their scheme structure, as well as the notion of 'filling' and its infinitesimal analogue, which requires a little care. In \S 2 we study finite schemes, especially curvilinear ones, and their deformation spaces.In \S 3 we
study various sheaves related to deformations
of secants, which are fundamental for our deformation-theoretic approach. Then in \S 4,5,6 we prove the aforementioned  3 theorems, along with a few corollaries. \par
 We thank Yeongrak Kim for helpful discussions and the referees for 
 constructive comments and suggestions and stimulating questions that
 have greatly improved the paper. 
 Hopefully, our indebtedness to the breakthrough work of Gruson
  and Peskine would be obvious to anyone.\par
\emph{Conventions.} In this paper we work over $\C$; sometimes for added generality, we will work
in the complex- analytic category (the kind of analytic varieties we have in mind are open subsets
of algebraic varieties). If $X$ is an algebraic  scheme
or analytic space, and $a$ is any negative number, the statements '$X$ has dimension $a$' and '$X$ is empty' are by definition equivalent. Similarly, if $X$ has dimension $n\geq 0$ and $m>n$ and $Y\subset X$, the statements '$Y$ is empty' and '$Y$ has codimension $m$
in $X$' are equivalent. We use $\l$ to denote the length of a module and $\l_z$ its local length at a point $z$. 
A statement about a 'general'
point $y$ of an analytic variety $Y$ is by definition true if it holds for all $y$ in the complement
of a nowhere dense analytic subset of $Y$.  \par
See \cite{sernesi}, \cite{lehn-montreal}, \cite{lichtenbaum}
 for some foundational results on Hilbert schemes
and deformation theory. A more general setting based on the language of Lie theory is presented in
\cite{atom} and \cite{sela}

\section{Secants and fillers}\label{secants-and-fillers-sec}
\subsection{Secants }\label{secants}
In this subsection, we will define secant loci set-theoretically.
Let $P$ be a nonsingular, quasi-projective
 variety and
 $\gg$ a connected smooth open subset of a suitable Hilbert scheme
 of $P$ parametrizing smooth connected complete unobstructed
 subvarieties
  $L\subset P$, to be called \emph{flats}.
  In applications, 
 flats $L$ will be mostly isomorphic to $\P^m$ (often $\P^1$).
 If in fact $L\simeq \P^1$, it is well known that
 \eqspl{}{
 \dim(\gg)=-K_P.L+\dim(P)-3.
 }
In any case, $\gg$ comes equipped with a tautological family $\mathcal L\to\gg$.
 
Let $X$ be a nonsingular, locally closed subvariety of $P$, closed in a neighborhood of $L$. 
Throughout, $P$ and $X$ will be considered fixed, though $L$ will move.
 Let $X\sbr k.$ and
  $X\spr k.$ denote, respectively,  the Hilbert scheme of length-$k$ subschemes of $X$, 
  and the $k$-th symmetric product. There is a well-known cycle (or 'Hilb to Sym')
  morphism (see \cite{lehn-montreal} \S 3.2 or \cite{structure}, \S 1.2) 
  \[\mathfrak c:X\sbr k.\to X\spr k.\ \  .\]
  Then $X\sbr  k.$
  is canonically stratified by  
 closed subschemes $X\sbr {k.}.$ for all partitions $(k.)$ of weight $k$, 
  where $X\sbr {k.}.$ consists of the finite schemes $z$ such that $\mathfrak c(z)=\sum k_ip_i$, 
  $p_i\in X$ not necessarily distinct. Such $z$ is said to be of cycle type $(k.)$, and properly so
 when the $p_i$ are distinct. We have that $X\sbr{k.}.\subset X\sbr{m.}.$ whenever the partition $(m.)$
  is a refinement of $(k.)$ , i.e. obtained by subdividing some blocks.\par

For a partition $(k.)= (k_1,...,k_r)$,  $L$
is said to be \emph{$(k.)$-secant} to $X$ if the schematic intersection $L\cap X$ is of finite length and
contains a subscheme  of type 
$(k.)$ , and
 \emph{proper} as such if  $L\cap X$ itself is properly of type $(k.)$. $L$ is said to be $k$-secant to $X$
 if the length of $L\cap X$ is finite and at least $k$ (equivalently,  $L$ is
 $(1^k)$-secant, i.e. properly $(l.)$-secant for some partition $(l.)$ of weight
 $\sum l_i=l\geq k$).\par
 Let \[\tilde S_{(k.)}\subset \gg\times X\sbr {k.}.\]
 denote the locus of pairs $(L, z)$ where $z\subset L\cap X$, and let $S_{(k.)}$ denote its projection
 to $\gg$. Thus,
$S_{(k.)}\subset\mathbb G$ is the locus of flats that are $(k.)$-secant to $X$. 
Note that $\tilde S_{(k.)}, S_{(k.)}$ are closed subsets  of $\gg\times X\sbr{k.}., \gg $ respectively
and the map $\tilde S_{(k.)}\to S_{(k.)}$ is finite-to-one and
bijective over the open subset parametrizing $L$s such that $L\cap X$ has length $k$. 
Moreover if either $L$ is a smooth curve or $k_i=1$ for all $i$,
 or more generally if the intersections $X\cap L$ are (locally) curvilinear schemes,
 then $X\sbr{k.}.$ has a well-defined structure
 of smooth closed subscheme of the smooth curvilinear Hilbert scheme
 $P\sbr k._0$ (see \S \ref{curvilin-sec}) in a neighborhood of $z$, hence $\tilde S_{(k.)}$ 
 is endowed with a scheme structure as the pullback of $X\sbr{k.}.$
 by the natural map $\mathcal L\sbr k._\gg\to P\sbr k.$ where $\mathcal L\sbr k._\gg$
 is the relative Hilbert scheme.
Then $S_{(k.)}$ is endowed with the image scheme structure.
Note that $S_{(k.)}$ is  closed in $\gg$ if $X\subset P$ is.
For $L\in S_{(k.)}$, the intersection $L\cap X$may have length greater than
the weight of $(k.)$ and possibly $\infty$.\par
 If $Y\subset P$ is an analytic submanifold of a tubular (analytic) neighborhood
 of a fibre $L$, or germ of one, we denote by 
 $\tilde S_{(k.),Y}$ and $S_{(k.),Y}$ the appropriate loci (or schemes) corresponding
 to deformations of $L$ within $Y$.\par
 One can similarly  define a secant scheme $\tilde S_k$,
   postulating just the total length of $L\cap X$,
  as the pullback of closed subscheme $X\sbr k.\subset P\sbr k.$
  by the natural map $\L\sbr k._\gg\to P\sbr k.$.
\subsection{Fillers} \label{fillers}
The working method of this paper is to study filling families
by infinitesimal methods, viz. the powerful tools of Grothendieck's deformation
theory. There is a certain subtlety involved here, because
  the filling notion is essentially of global
  character and meaningful mainly for (quasi-projective) \emph{varieties}, 
  while deformation theory applies directly to formal completions, hence to \emph{germs}.
   That some care is required is illustrated by the fact that
    a map of point germs can be birational without being smooth
    at the unique (closed) point.\par
  We are thus led to
  the notion
  of 'infinitesimal filling', which serves to bridge between filling notions and infinitesimal methods.
 
 \begin{defn}\label{filling} (i) Let $Y$ be a smooth analytic open subset
 of an irreducible projective
  variety $\bar Y$ and $\L/B$ a smooth flat family
 of closed subvarieties of $\bar Y$ contained in $Y$,
  parametrized by an irreducible analytic variety $B$, such that $\L$
 is irreducible. The family is said to be \emph{filling}
 if the natural projection $\pi_Y:\L\to Y$ is dominant,
 i.e. its image contains an analytic open set.\par
(ii) Let $Y$ be a smooth local germ along a smooth 
irreducible subvariety $L$ of a smooth quasi-projective variety $P$. 
 A local deformation $\L\to\Spec(A)$ of $L$ within $Y$ is said to be an
 \emph{ infinitesimally filling family} for $Y$
 if there exists an irreducible closed subscheme $S$ of $\Spec(A)$, such that the  map
 $(\L_S)_{\red}\to Y$ induces a surjection on (Zariski) tangent spaces $T_p(\L_S)_{\red}\to T_pY$
 for a general point $p\in L$.
 \end{defn}
 \begin{rem}
In Part (i) above, $Y$ could equal $\bar Y$ or be Zariski open it it. 
A typical non-algebraic $Y$ can be a tubular neighborhood of some fibre of $\L/B$.
 \end{rem}
 \begin{rem}
 To be clear, we are talking here about an \emph{arbitrary} member of the $\L/B$ family
 through a \emph{general} point of $Y$, which is tantamount to talking about
 an \emph{arbitrary} point of a \emph{general} fibre of a suitable morphism; we aim to invoke generic
 smoothness, which says that the morphism is smooth at such a point.
 \end{rem}
A link between the two types of 'filling' is the following.
  \begin{lem}\label{filling-lem}
  In the situation of Definition \ref{filling}, (i),
  assume additionally that $\L\to B$ is proper.
  Then if $p\in Y$ is general and $L\to b$ is an \emph{arbitrary} fibre of 
    the filling family $\L/B$ going through $p$,
    then the germ of $\L/B$ along $L\to b$, i.e. $\L\times_B\Spec(\O_{B,b})$, 
    is infinitesimally filling, hence so is any larger family.
  \end{lem}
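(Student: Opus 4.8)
The plan is to reduce the statement to generic smoothness of the projection $\pi_Y\colon\L\to Y$ and to transport that smoothness to the germ of $\L/B$ along a single fibre. I would begin with the elementary identification that makes Definition \ref{filling}(ii) checkable. Put $A=\O_{B,b}$ and take for $S$ the whole of $\Spec(A)$, which is irreducible because $B$ is an irreducible (reduced) variety, so that $\O_{B,b}$ is a domain. The germ $\L_S=\L\times_B\Spec(\O_{B,b})$ is a localization of the reduced variety $\L$, hence is reduced and $(\L_S)_{\red}=\L_S$; moreover for a point $q$ of the closed fibre $L=\L_b$ the local rings of $\L_S$ and of $\L$ at $q$ coincide, so $T_q(\L_S)_{\red}=T_q\L$, and under this identification the structural map $(\L_S)_{\red}\to Y$ is nothing but $\pi_Y$. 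Consequently, proving that this germ is infinitesimally filling with the above $S$ amounts exactly to showing that $d\pi_Y\colon T_{p'}\L\to T_{p'}Y$ is surjective for a general point $p'$ of $L$, where I identify $p'\in L$ with its image in $Y$ via the closed embedding $\L_b\hookrightarrow Y$.

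Next I would invoke generic smoothness. Let $G\subset\L$ be the locus where $d\pi_Y$ is surjective on Zariski tangent spaces; since $Y$ is smooth this is an open condition, and since $\pi_Y$ is dominant with $\L$ irreducible, generic smoothness (we are over $\C$; properness of $\L\to B$, hence of $\L\to B\times Y$, guarantees through the proper mapping theorem that the relevant loci are honest analytic subsets) shows that $G$ is dense in $\L$. More precisely there is a dense open $V\subset Y$, with nowhere dense complement, consisting of regular values, and the defining property of a regular value is exactly that its \emph{entire} preimage lies in $G$, i.e. $\pi_Y^{-1}(V)\subset G$. Here one works over the dense open smooth locus of $\L$ in order to apply the theorem; in the applications $B=\gg$ is smooth, and in general one restricts to $\L^{\mathrm{sm}}$.

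The heart of the matter is then the matching of the two genericity conditions, which is the step I expect to be the main obstacle: the lemma fixes a \emph{general} $p\in Y$ and an \emph{arbitrary} fibre through it, whereas infinitesimal filling asks about a \emph{general} point of that fibre. The resolution is to take $p\in V$: for any fibre $L=\L_b$ passing through $p$, the point $q\in L$ lying over $p$ satisfies $q\in\pi_Y^{-1}(V)\subset G$, so $G\cap L\neq\emptyset$; as $G$ is open and $L$ is irreducible, $G\cap L$ is dense in $L$, whence $d\pi_Y$ is surjective at a general point of $L$, which is what we needed. It is essential here that $V$ be a set of regular \emph{values}, whose whole preimage is good, rather than merely a locus of smooth points: this is what forces every fibre through the single general point $p$ — and not just the generic fibre of $\L/B$ — to meet $G$. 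A naive dimension count on the image $\pi_Y(\L\setminus G)$ would break down precisely in the interesting case where the fibres sweep out $Y$ in positive-dimensional families.

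Finally, the closing clause is immediate: for any larger family the base contains $\Spec(\O_{B,b})$ together with $S$, and the infinitesimal filling condition, being a statement about the map $(\L_S)_{\red}\to Y$, is unaffected; hence the larger family is infinitesimally filling as well.
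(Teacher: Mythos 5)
Your overall skeleton — reduce the lemma to submersivity of $\pi_Y$ at a general point of $L$, produce a dense set $V$ of regular \emph{values} whose entire preimage is submersive, and then intersect with $L$ — is the same as the paper's, and your first paragraph (identifying the germ $\L\times_B\Spec(\O_{B,b})$ and its tangent spaces with those of $\L$ along $L$) is fine. But there is a genuine gap, and it sits exactly where the lemma has its teeth: the singular locus of $\L$. Since $\L\to B$ is a smooth morphism, $\L$ is singular precisely over $B^{\mathrm{sing}}$; the lemma concerns an \emph{arbitrary} fibre through a general $p$, so $b$ may well lie in $B^{\mathrm{sing}}$, in which case the \emph{whole} fibre $L$ lies in $\L^{\mathrm{sing}}$ (the paper's remark after the lemma flags this as the typical situation). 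Both of your devices fail there. First, the locus $G$ where $d\pi_Y$ is surjective on Zariski tangent spaces is \emph{not} open when $\L$ is singular, smoothness of $Y$ notwithstanding: for the Whitney umbrella $\{z^2=xy^2\}\subset\aa^3$ mapping to $\aa^2$ by $(x,y,z)\mapsto (x,z)$, the pinch point at the origin (Zariski tangent space all of $\C^3$) lies in $G$, while the nearby smooth points $(0,y,0)$, $y\neq 0$, do not; so "$G\cap L\neq\emptyset$ plus openness" is not a valid route to density. Second, your fallback of restricting to $\L^{\mathrm{sm}}$ in order to apply generic smoothness only yields $\pi_Y^{-1}(V)\cap\L^{\mathrm{sm}}\subset G$; when $L\subset\L^{\mathrm{sing}}$ this says nothing about \emph{any} point of $L$, so the key assertion $q\in\pi_Y^{-1}(V)\subset G$ is unjustified exactly in the case that matters. (Your appeal to properness via the proper mapping theorem also does not patch this: $\pi_Y$ itself need not be proper when $B$ is non-compact.)

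The missing idea — and it is how the paper argues — is to transfer submersivity down from a smooth model by \emph{factoring derivatives}. Use properness of $\L\to B$ to extend it to a proper flat algebraic family $\bar\L\to\bar B$ of subvarieties of $\bar Y$, take a desingularization $\hat\L\to\bar\L$, and apply generic smoothness to $\hat\L\to\bar Y$ to get a dense open $V\subset\bar Y$ over which this map is smooth. Then for \emph{any} $q\in\L$ lying over $p\in V$, choose $f\in\hat\L$ above $q$; the surjection $T_f\hat\L\to T_pY$ factors as $T_f\hat\L\to T_q\L\to T_pY$, so $T_q\L\to T_pY$ is surjective no matter how singular $\L$ is at $q$. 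This gives $\pi_Y^{-1}(V)\subset G$ on all of $\L$, after which your own closing step works with no openness of $G$ needed: $G\cap L\supseteq V\cap L$, which is open in $L$ and nonempty (it contains $p$), hence dense in the irreducible $L$. With that repair your proof becomes the paper's proof; without it, the argument fails precisely for fibres over singular points of $B$.
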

  \begin{proof}
  $\L\to B$ extends to
  a proper flat algebraic family $\bar \L\to\bar B$. By enlarging $B$, we may assume $B$ coincides
  with the open subset of $\bar B$ corresponding to fibres contained in $Y$.
  Let $\hat\L\to\bar \L$ be a desingularization. Then the  fibre $F$ of 
  $\bar\L\to\bar Y$ over a general point $p\in Y\subset\bar Y$ is
  smooth, i.e. for \emph{all} $f\in F$, the derivative map $T_f\hat \L \to T_pY$ is surjective.
  In particular, if $f$ is in the open subset of $F$ going into 
  a point of $\L$, say $q\in (L\to b)$, then the latter derivative factors
  trough $T_q\L\to T_pY$, so the latter is surjective as well. We may identify $q$ with $p$,
  so the infinitesimal filling property follows.
  
  \end{proof}
  \begin{rem}
  In applications, $\L\to B$ will be a smooth morphism but $B$, hence $\L$ as total
  space, may have arbitrary singularities.
  \end{rem}

   There are analogues of the notions of filling and infinitesimally filling,
   and of their relation, as in the last Lemma, \emph{with respect to $\Lambda$},
    where 'point' is replaced by $\Lambda$, a subvariety
moving in a smooth filling family on $Y$: the precise definition
is the following
\begin{defn} \label{inf-filling-higher}
Notation as in Definition \ref{filling}, let $\L_i\to B_i$ be proper and filling families, $i=1,2$.
Let $B\subset B_1\times B_2$ be a subset parametrizing pairs $(\Lambda, L)$
with $\Lambda\subset L$. Then the family of pairs parametrized by $B$ is said to be
filling with respect to $\Lambda$ if the projection $B\to B_1$ is dominant. The family
is said to be infinitesimally filling with respect to $\Lambda$ at a given $L$ corresponding
to $b_2\in B_2$ is for a general $b\in B$ projecting to $b_2$,
the projection of Zariski tangent spaces $T_bB\to T_{b_1}B_1$ is surjective.
\end{defn}
The following extension of Lemma \ref{filling-lem} is proved in the same way:
\begin{lem}\label{filling-higher-lem}
Notations as above, if $b\in B_1$ is general and $b\in B$ projecting to $b_1$ is arbitrary,
then the map of Zariski tangent spaces $T_bB\to T_{b_1}B_1$ is surjective.
\end{lem}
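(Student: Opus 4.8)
The plan is to run the proof of Lemma~\ref{filling-lem} essentially verbatim, replacing the dominant projection $\pi_Y:\L\to Y$ used there by the projection $\pi_1:B\to B_1$, whose dominance is precisely the hypothesis that the family of pairs be filling with respect to $\Lambda$ (Definition~\ref{inf-filling-higher}). The sole nontrivial input, as before, is generic smoothness in characteristic~$0$ applied to $\pi_1$; everything else is bookkeeping to accommodate the fact that $B$, and its bases $B_1,B_2$, may be arbitrarily singular as total spaces.

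First I would algebraize and compactify. Since each $\L_i\to B_i$ is proper, it extends to a proper flat algebraic family $\bar\L_i\to\bar B_i$ over a projective base containing $B_i$ as an open subset. The incidence locus of pairs $(\Lambda,L)$ with $\Lambda\subset L$ is closed in $\bar B_1\times\bar B_2$; let $\overline B$ be the closure of $B$ inside it. As $\overline B$ is closed in $\bar B_1\times\bar B_2$, which is proper over $\bar B_1$, the induced map $\bar\pi_1:\overline B\to\bar B_1$ is proper, it restricts to $\pi_1$ on the open subset $B\subset\overline B$, and it is dominant because $\pi_1$ is.

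Next I would desingularize and invoke generic smoothness. Let $\nu:\hat B\to\overline B$ be a resolution, so $\hat B$ is smooth. Since the conclusion is only asked for general $b_1$, I may discard the (nowhere dense) singular locus of $\bar B_1$ and work over its smooth locus, which meets $B_1$ in a dense open set; the resulting $g:\hat B\to\bar B_1$ is then a dominant morphism of smooth varieties, so by generic smoothness it is smooth over some dense open $U\subset\bar B_1$, and we may take $U\subset B_1$. Hence for general $b_1\in B_1$ and \emph{every} point $\hat b\in\hat B$ over $b_1$, the differential $T_{\hat b}\hat B\to T_{b_1}\bar B_1$ is surjective. This is exactly the step that turns `general in the base' into `arbitrary in the fibre', playing the role of the smooth fibre $F$ in Lemma~\ref{filling-lem}.

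Finally I would descend the surjectivity to $B$. Given an arbitrary $b\in B$ over such a general $b_1$, pick $\hat b\in\hat B$ with $\nu(\hat b)=b$; the surjection above factors as $T_{\hat b}\hat B\to T_b\overline B\to T_{b_1}\bar B_1$, so the second arrow is surjective. Since $B$ is open in $\overline B$ and $B_1$ is open in $\bar B_1$ we have $T_b\overline B=T_bB$ and $T_{b_1}\bar B_1=T_{b_1}B_1$, and if $B$ is given a nonreduced structure its tangent space only enlarges, so surjectivity of $T_bB\to T_{b_1}B_1$ follows. I expect the main obstacle to be not any single computation but precisely this interplay with singularities: one cannot apply generic smoothness to $\pi_1$ directly, and must interpose $\hat B$ and then recover the statement for the possibly singular $B$ via the elementary fact that surjectivity of a composite of linear maps forces surjectivity of its final factor; the analytic-to-algebraic passage is handled, as in Lemma~\ref{filling-lem}, by the properness hypothesis.
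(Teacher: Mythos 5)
Your proposal is correct and takes essentially the same approach as the paper: the paper's entire proof is the remark that the lemma "is proved in the same way" as Lemma~\ref{filling-lem}, and your argument is exactly that proof transposed to the projection $B\to B_1$ --- algebraize and compactify via properness, desingularize, apply generic smoothness to get surjectivity of the differential at \emph{every} point over a general $b_1$, then factor through $T_b\overline{B}$ to descend to the possibly singular $B$. The only difference is that you make explicit the bookkeeping (closure of $B$ in the incidence locus, openness of $B$ in $\overline{B}$) that the paper leaves implicit.
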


   \section{Curvilinear schemes}\label{curvilin-sec}
  \subsection {Basics}
  Let $L$ be a nonsingular variety. Let $L\sbr k.$,  $L\sbr k._0\subset L\sbr k.$, and
  $L\spr k.$ denote, respectively,  the Hilbert scheme of length-$k$ subschemes of $L$, 
  its open subset consisting of \emph{curvilinear} schemes, i.e. those of local embedding
  dimension 1 or less (see below), 
  and the $k$-th symmetric product. There is a well-known cycle (or 'Hilb to Sym')
  morphism \[\mathfrak c:L\sbr k.\to L\spr k.\ \  .\]
  Then $L\sbr  k.$
  is canonically stratified by  closed
  subsets $L\sbr {k.}.$ for all partitions $(k.)$ of weight $k$, 
  where $L\sbr {k.}.$ consists of the finite schemes $z$ such that $c(z)=\sum k_ip_i$, 
  $p_i\in X$ not necessarily distinct. Such $z$ is said to be of cycle type $(k.)$, and properly so
  if the $p_i$ are distinct. 
  The open subset $L\sbr{k.}._0\subset L\sbr{k.}.$ is well-understood and has a canonical scheme 
  structure..
  We have that $L\sbr{k.}._0\subset L\sbr{m.}._0$ whenever the partition $(m.)$
  is a refinement of $(k.)$ , i.e. obtained by subdividing some blocks.
  The set of curvilinear schemes properly of type $(k.)$ is well known (compare 
 \S \ref{emb-def-sec} below) to be
  a smooth open subset 
  $L\sbr {k.}._0\subset L\sbr k.$ of dimension $k(\dim(L)-1)+r$ where $(k.)=(k_1\geq...\geq k_r>0)$.
\subsection{Embedded deformations}\label{emb-def-sec}
A local curvilinear scheme $z$ of length $k\geq 2$ is 
isomorphic to $\Spec(\C[x]/(x^k))$. It has unobstructed
versal deformation space 
\[\Def(z)\simeq T^1_z\simeq\O_z/(x^{k-1})\]
 with basis $1, x, ...,x^{k-2}$. 
 Identifying $\Def(z)$ with the set of polynomials \[\{x^k+b_{k-2}x^{k-2}+...+b_0:b_i\in \C\},\] it is stratified by loci
 $D_{(k.)}$ corresponding to partitions of weight $k$. If the partition
 $(k.)$ is written as $(l_i^{e_i})$ with $(l.)$ strictly decreasing,
  then $D_{(k.)}$ can be identified with the following collection of factored polynomials:
  \[D_{(k.)}=\{\prod\limits_i\prod\limits_{j=1}^{e_i}(x-a_{i,j})^{l_i}: a_{i,j}\in\C, \sum\limits_{i,j} a_{i,j}=0\}.\]
  Identifying, for fixed $i$,  $\prod\limits_j(x-a_{i,j})$ with a point in $\sym^{e_i}(\A^1)$ and, in turn, with
  a point in $\A^{e_i}$ via the elementary symmetric functions $\sigma_r(a_{i,1},...,a_{i,e_i})$,
$D_{(k.)}$ is bijectively and birationally (but not isomorphically)
  parametrized by the subspace of $\prod \sym^{e_i}(\A^1)$ defined by 
  $\sum\limits_i\sigma_1(a_{i,1},...,a_{i,e_i})=0$.
  
 \par
When $z$ is embedded in a smooth variety,
its ideal $\I_z$ has the form
\[ (f_1=x_1^k, f_2=x_2,...,f_n=x_n)\]
for a suitable regular system of parameters $x_1,...,x_n$; first-order deformations of $\I_z$ take the form
\[(f_1+b_1,..., f_n+b_n), b_i=\sum\limits_{j=0}^{k-1} b_{i,j}x_1^j, i=1,....,n.\]
The obstructions vanish, because $\I_z$ is a local complete intersection. The locus of deformations
preserving the cycle type, or equivalently, locally trivial deformations, 
is the (smooth) subscheme given by the condition
\[b_{1,j}=0, j<k-1.\]
For a general, non-local curvilinar scheme $Z=\coprod z_i$, the versal deformation and Hilbert scheme are
the product of those for the $z_i$.
\subsection{Abstract deformations}

Up to a smooth factor, deformations of $Z$ on a smooth variety
$L$ are the same as deformations of $z$ on a smooth
curve-germ $C$ containing $Z$ as given in the above notation by $x_2=...=x_n=0$. 
In fact, if $Z$ is properly of type $(k.)$, i.e.  has multiplicity exactly $k_i$ at $p_i$,
and $z_i$ denotes the part of $Z$ supported at $p_i$, 
then the versal deformation  $\Def(Z)$ of $Z$ as abstract scheme splits as 
$\prod \Def(z_i)$ . There is a local classifying map from the Hilbert scheme of $L$:
\eqspl{}{
\rho:L\sbr {k}.\to \Def(Z).
} On the level of 1st-order deformations, this corresponds to the natural surjective map
\[N_{Z,L}\to T^1_Z\] 
where $N_{Z,L}=\Hom(\I_{Z, L}, \O_Z)$ is the lci normal bundle.
Denoting the kernel of this map by $N'_{Z,L}$, the 'locally trivial' normal bundle,
and identifying $T_L\otimes\O_Z\simeq\mathrm{Der}(\O_L, \O_Z)$,
we have exact sequences
\eqspl{}{&\exseq{T_{L/Z}}{T_L\otimes\O_Z}{N'_{Z,L}},\\ 
&\exseq{N'_{Z,L}}{N_{Z,L}}{T^1_Z}}
  where $T_{L/Z}$ is the derivations presrving $I_Z$.
 In particular, the morphism $\rho$ is smooth.

Consequently, if $Z$ is properly of type $(k.)$ then $L\sbr{k.}.$ and $L\sbr k., k=\sum k_i$ are smooth
at $Z$. Moreover, if $L\sbr{m.}.\subset L\sbr k.$ is any other stratum, necessarily
containing $L\sbr k.$,  containing $Z$, then the singularity
of $L\sbr {m.}.$ at $Z$ is the same up to a smooth factor
as that of $C\sbr {m.}.$ and the normalization of $L\sbr{m.}.$
is smooth. Indeed the normalization in question is just $\prod (C\spr {b_i}.)$
where the $b_i$ are the multiplicities of distinct-size blocks of $(m.)$, i.e.
 $(m.)=(a_1=...=a_1>a_2=...)$ with each $a_i$ occurring $b_i$ times.  
For instance, if $(k.)=(3), (m.)=(2,1)$ then
$C\sbr {m.}.$, hence $L\sbr {m.}.$ is locally $(\mathrm{smooth}\times{\mathrm{ordinary\ cusp}})$ .
The identification of $\prod (C\spr {b_i}.)$ with the normalization of $C\sbr {m.}.$ is immediate from
the fact that $\prod (C\spr {b_i}.)$ is smooth and the map $\prod (C\spr {b_i}.)\to C\sbr {m.}.$ is 
finite  and birational.
 \par 
  
  \par
  In the situation of Theorem \ref{point-projection-thm}, note that 
   there is an induced map near $(L,Z)$  
   \eqspl{mu}{\mu:\tilde S_{k,Y}\to \Def(Z)}
  such that $\tilde S_{(k.), Y}=\mu\inv(D_{(k.)})$. 
  The theorem's assertions are mostly covered by the statement
  that $\mu$ is a smooth morphism, which we will prove in \S \ref{proof}. The remaining
  assertions have to do with the case of intersections
  that are 'excessive' , i.e. have length $>k$. To prepare  this we next turn to
  nested pairs of curvilinear schemes.
  \par
  In \cite{sela} one can find a description of the 'tangent complex' of any affine scheme
  like $Z$ as a \dgla, whose deformation theory as such coincides with that of $Z$;
  similarly, when $Z$ is embedded in a smooth scheme $L$, the normal complex
  (or sheaf, when $Z$ is a local complete intersection) 
   $N_{Z/L}$ has the structure of 'Lie atom' whose deformation theory is that of
   embedded deformations of $Z$ in $L$.
  
  \subsection{Pairs}\label{pairs}
  We consider nested pairs of curvilinear schemes (cf. \cite{sernesi}, \S 4.5 for general flag Hilbert schemes).
  \begin{lem}\label{pairs-lem}
  Let $C$ be a smooth curve and denote by $C\spr d,k.\subset C\spr d.\times C\spr k.$
  denote the locus of nested pairs of schemes $z_d\leq z_k$ and by $C^{(d.),k}\subset C\spr d,k.$
  the sublocus where $z_d$ is of type $(d.)$ where$(d.)$ is a partition of weight $d$.
  Then the projection $C^{(d.), k}\to C^{(d.)}$ is a smooth morphism., hence the singularities
  of $C^{(d.), k}$ are the same up to a smoth factor as those of $C^{(d.)}$.
  \end{lem}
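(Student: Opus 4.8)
The plan is to identify the flag locus $C\spr d,k.$ with an honest product over its base, exploiting the fact that on a smooth curve finite subschemes are nothing but effective divisors. First I would recall the standard facts for a smooth curve $C$: the symmetric product $C\spr m.$ represents the functor of relative effective Cartier divisors of degree $m$ on $C$ (with universal object $\mathcal Z_m\subset C\times C\spr m.$ a relative effective Cartier divisor), it is smooth, and it agrees with the Hilbert scheme $C\sbr m.$. Under this dictionary a $T$-point of $C\spr d,k.$ is exactly a nested pair $\mathcal Z_d\subseteq\mathcal Z_k$ of relative effective Cartier divisors on $C\times T/T$ of degrees $d$ and $k$.

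The key step is to make subtraction of nested divisors into a morphism. Given such a nested pair, the inclusion of invertible ideal sheaves $\mathcal I_{\mathcal Z_k}\subseteq\mathcal I_{\mathcal Z_d}$ exhibits a canonical global section of $\O(\mathcal Z_k-\mathcal Z_d)$ which is fibrewise a nonzerodivisor, since over each $t\in T$ it cuts out the honest effective divisor $(z_k)_t-(z_d)_t$ of degree $k-d$ on the smooth curve $C_t$. Hence its zero scheme $\mathcal W:=\mathcal Z_k-\mathcal Z_d$ is a relative effective Cartier divisor of degree $k-d$, flat over $T$, and so is classified by a morphism $T\to C\spr {k-d}.$. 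This produces a natural transformation $(\mathcal Z_d\subseteq\mathcal Z_k)\mapsto(\mathcal Z_d,\mathcal W)$ whose inverse is addition of divisors, $(\mathcal Z_d,\mathcal W)\mapsto(\mathcal Z_d\subseteq\mathcal Z_d+\mathcal W)$. These are mutually inverse natural bijections, so by Yoneda they are induced by mutually inverse morphisms, giving an isomorphism $C\spr d,k.\cong C\spr d.\times C\spr {k-d}.$ commuting with the projections to $C\spr d.$.

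Restricting over the type-$(d.)$ stratum, i.e. base-changing this isomorphism along $C^{(d.)}\hookrightarrow C\spr d.$, I would then identify $C^{(d.),k}$ with $C^{(d.)}\times C\spr {k-d}.$ over $C^{(d.)}$, compatibly with the projection $\pi$ of the lemma. The projection $C^{(d.)}\times C\spr {k-d}.\to C^{(d.)}$ is the base change of the structure morphism $C\spr {k-d}.\to\Spec(\C)$; as $C\spr {k-d}.$ is a smooth variety (symmetric product of a smooth curve) this structure morphism is smooth, and smoothness is stable under base change, even along the possibly singular base $C^{(d.)}$. Therefore $\pi$ is smooth, and $C^{(d.),k}$ has, up to the smooth factor $C\spr {k-d}.$, precisely the singularities of $C^{(d.)}$, as claimed.

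The step I expect to require the most care is the scheme-theoretic, as opposed to merely set-theoretic, nature of the subtraction: one must check that the difference $\mathcal W$ of the nested universal divisors is a genuine relative effective Cartier divisor, flat over the possibly nonreduced and singular base $C^{(d.),k}$, and not just a fibrewise divisor. This is exactly what lets subtraction be a morphism even though $z_d$ and $z_k$ generically share support, and it is the one place where the smoothness of $C$ (ensuring length-$m$ subschemes are Cartier) is essential. Once this is secured, the product decomposition and hence the smoothness of $\pi$ follow formally.
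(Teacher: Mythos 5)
Your proof is correct, and it rests on the same underlying decomposition as the paper's: nested pairs of finite subschemes of a smooth curve split as a product via ``division''. The difference is in how this is established. The paper makes the same initial reduction you do (the projection $C^{(d.),k}\to C^{(d.)}$ is a base change of $C^{(d,k)}\to C^{(d)}$), but then works locally: it sets $C=\A^1$, $z_k=(x^k)$, identifies the punctual Hilbert schemes with spaces $V_d, V_k$ of polynomial perturbations, and observes that the incidence locus $V_{d,k}=\{(h,g):(x^d+h)\,|\,(x^k+g)\}$ is isomorphic to $V_d\times V_{k-d}$ by factoring $x^k+g=(x^d+h)(x^{k-d}+h')$ --- which is exactly your divisor subtraction written in coordinates, and which the paper declares ``obviously'' an isomorphism. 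You instead work globally and functorially: the fibrewise-nonzerodivisor section of $\O(\mathcal Z_k-\mathcal Z_d)$, its flat zero scheme, and Yoneda give $C^{(d,k)}\cong C^{(d)}\times C^{(k-d)}$ over arbitrary, possibly non-reduced, bases. What your route buys is rigor precisely at the point the paper waves through: the flatness of the difference divisor in families is the real content behind the claim that the factorization is algebraic in the coefficients, and you correctly flag it as the delicate step. What the paper's route buys is explicit local data that the Lemma itself does not need but the rest of the paper does: the tangent space $T_{d,k}\subset V_d\oplus V_k$ of \eqref{t-d-k-eq}, the ramification analysis of $V_{d,k}\to V_k$ (image of the differential equal to the polynomials divisible by $x^{\min(d,k-d)}$), and the subspace $T^0_{d,k}$, all of which feed into the excess-case computations of Proposition \ref{excess-prop}. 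So your argument is a clean and fully rigorous substitute for the Lemma as stated, but one would still want the paper's coordinate description as a companion for the later sections.
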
\begin{proof}
  The projection in question is a base-change of the projection $C^{d,k}\to C^d$, so it suffices
  to prove the latter morphism is smooth. For this we may work locally on $C$ and assume $C=\A^1, z_k=(x^k)$ (hence $z_d=(x^d)$).
  Denote by $V_{d}$ the space of monic polynomials of degree $<d$ in $x$,
  identified with the local Hilbert scheme of $(x^d)$ on $\A^1$,
  and let $V_{d,k}\subset V_d\times V_k$ denote the locus of pairs $(h,g)$ such
  that \[(x^d+h)|(x^k+g).\] 
  Geometrically, $V_{d,k}$ is the space of 
  pairs $(z_d\leq z_k)$ of cycles or subschemes  on $\A^1$.
  This is obviously isomorphic to $V_d\times V_{k-d}$, via $x^k+g=(x^d+h)(x^{k-d}+h')$.
  In particular, $V_{d,k}$ and the projection $V_{d,k}\to V_d$ are smooth. 
 Consider the tangent space at the origin:
  \eqspl{t-d-k-eq}{
  T_{d,k}=T_0(V_{d,k})\subset V_d\oplus V_k.
  } This is a $k$-dimensional subspace.
 Though $V_{d,k}$ and $V_k$ are smooth and $k$-dimensional, 
 the projection
  $p:V_{d,k}\to V_k$ is ramified.
  The image of the differential at the origin $dp:T_{d,k}\to V_k$ can be identified
  as the space of polynomials divisible by $x^{\min(d, k-d)}$
  and therefore has dimension $\max(d, k-d)$. The projection $T_{d,k}\to V_d$ is surjective,
  which proves the Lemma.\end{proof}
For future reference, let  $T^0_{d,k}$ denote the kernel of the composite
surjection $T_{d,k}\to V_d\to V_d/(x^{d-1})$. This
$T^0_{d,k}$ is a subspace of codimension $d-1$ in $T_{d,k}$.
  It corresponds to (embedded) deformations of $(z_d\leq z_k)$ where $z_d$ deforms
  locally trivially.\par 
\par
The Lemma can be easily extended to the case of an arbitrary smooth ambient variety:
\begin{prop}\label{pairs-prop}Let $X$ be a smooth variety or complex manifold,
 and $X_0\sbr d,k.\subset X_0\sbr d.\times X_0\sbr k.$
be the locus of nested curvilinear schemes. The $X_0\sbr d,k.$ and the morphism $X_0\sbr d,k.\to X_0\sbr d.$
are smooth.

\end{prop}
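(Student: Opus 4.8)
The plan is to reduce the statement to the curve case already settled in Lemma~\ref{pairs-lem}, adjoining a smooth ``transverse'' factor that records how the carrying curve germ moves inside $X$. Both assertions --- smoothness of the total space $X_0\sbr d,k.$ and of the projection $\pi\colon X_0\sbr d,k.\to X_0\sbr d.$ --- are local, indeed analytic-local, on the source, so I would fix a closed point $(z_d\le z_k)$ and argue in a neighborhood of it. Since $z_k$, and with it $z_d$, is supported on a finite set and deformations attached to distinct support points are independent, I would first invoke the product decomposition of \S\ref{emb-def-sec} to reduce to the case in which $z_k$ is supported at a single point $p$.

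At $p$ I would choose a regular system of parameters $x_1,\ldots,x_n$ adapted as in \S\ref{emb-def-sec}, so that $\I_{z_k}=(x_1^k,x_2,\ldots,x_n)$; since $z_d\le z_k$ and $\O_{z_k}=\C[x_1]/(x_1^k)$ is cyclic, the length-$d$ subscheme is forced to be $\I_{z_d}=(x_1^d,x_2,\ldots,x_n)$, so both schemes lie on the curve germ $C=\{x_2=\cdots=x_n=0\}$. Every curvilinear scheme near $z_k$ has tangent direction close to $\partial/\partial x_1$, hence projects isomorphically to the $x_1$-axis and is a graph: it is cut out by $(x_2-g_2(x_1),\ldots,x_n-g_n(x_1),\,p(x_1))$ with $p$ monic of degree $k$ and each $g_j$ of degree $<k$, and its length-$d$ subscheme on the same graph-curve is cut out by replacing $p$ with a monic divisor $q\mid p$ of degree $d$. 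This furnishes an analytic chart of $X_0\sbr d,k.$ with coordinates $(q,p,g_2,\ldots,g_n)$ subject only to the single relation $q\mid p$.

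Now the relation $q\mid p$ is exactly the defining condition of the space $V_{d,k}$ of Lemma~\ref{pairs-lem}, where it was shown that $V_{d,k}\cong V_d\times V_{k-d}$ (via $p=qq'$) is smooth and that $V_{d,k}\to V_d$ is smooth; the remaining parameters $g_2,\ldots,g_n$ are unconstrained and contribute a free affine factor $\A^{(n-1)k}$ representing the motion of $C$. Thus the chart is isomorphic to $V_{d,k}\times\A^{(n-1)k}$, which is smooth, giving the first assertion. For $\pi$ I would note that the image scheme $z_d$ depends only on $q$ and on the residues $g_j\bmod q$; so in these coordinates $\pi$ becomes, after the isomorphism $V_{d,k}\cong V_d\times V_{k-d}$, the projection that forgets the factor $V_{k-d}$ together with the part of each $g_j$ of degree $\ge d$. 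This is a trivial affine-space bundle, hence a smooth surjection onto the corresponding chart of $X_0\sbr d.$.

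The only genuine work lies in justifying the graph chart: that curvilinearity (embedding dimension $\le 1$) is precisely the open condition under which nearby nested pairs admit the graph form, and that the resulting parametrization is an isomorphism onto an open subset carrying the incidence scheme structure inherited from $X_0\sbr d.\times X_0\sbr k.$. Should this bookkeeping prove awkward, the alternative I would fall back on is an infinitesimal check of smoothness of $\pi$ built from the normal-sheaf sequences of \S\ref{emb-def-sec}: the module governing liftings of the nested pair splits the curve-case contribution --- controlled by the smooth morphism of Lemma~\ref{pairs-lem} --- off from a free contribution coming from $T_X\otimes\O_{z_k}$, the latter being visibly unobstructed. In either approach the ramification phenomenon that made the curve case delicate (the map $V_{d,k}\to V_k$) plays no role, which is why the extension is essentially routine.
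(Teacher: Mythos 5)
Your proposal is correct and is essentially the paper's own argument: the paper likewise works locally at a punctual pair in adapted coordinates $\I_{z_d}=(x_1^d,x_2,\dots,x_n)$, $\I_{z_k}=(x_1^k,x_2,\dots,x_n)$, describes nearby nested pairs by divisibility data $(h_1,g_1)\in V_{d,k}$ in the $x_1$-direction plus transverse data subject only to congruences $h_i\equiv g_i$, and quotes Lemma \ref{pairs-lem} (via $V_{d,k}\cong V_d\times V_{k-d}$ and surjectivity of $T_{d,k}\to V_d$) for both smoothness assertions. The only difference is packaging: where you conclude directly from the graph chart and keep an infinitesimal argument in reserve, the paper closes instead with the canonical surjection $\rho:N_{z_k/X}\oplus N_{z_d/X}\to N_{z_k/X}\otimes\O_{z_d}$, whose kernel carries a Lie-atom structure governing deformations of the pair, so that unobstructedness follows from $H^1(\ker\rho)=0$ --- precisely your fallback, used by the paper to bypass the chart bookkeeping you flag as the remaining work.
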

\begin{proof}
We can work locally on $X$ at a punctual pair $z_d\leq z_k$ and choose local
  parameters $x_1,...,x_n$ so that
  \[\I_{z_d}=(x_1^d, x_2,...,x_n), \I_{z_k}=(x_1^k, x_2,...,x_n).\]
  Then deformations of these are given by
  \[x_1^d+h_1, x_2+h_2,...,x_n+h_n, h_i\in\sum\limits_{j=0}^{d-1}\C x_1^j, i=1,...,n,\]
  \[x_1^k+g_1, x_2+g_2,...,x_n+g_n, g_i\in\sum\limits_{j=0}^{k-1}\C x_1^j, i=1,...,n.\]
  The compatibility condition on local (arbitrary-order) deformations is
  \eqspl{}{
  x_1^d+h_1|x_1^k+g_1, h_i\equiv g_i\mod x_1^d, i=2,...,n.
  }
  The corresponding first-order conditions are
  \eqspl{}{
   (h_1, g_1)\in T_{d,k}, h_i\equiv g_i\mod x_1^d, i=2,...,n.
    }\par
  Canonically, we may identify these deformations as follows. Note the canonical map
  \eqspl{rho-eq}{
  \rho: N_{z_k/X}\oplus N_{z_d/X}\to N_{z_k/X}\otimes\O_{z_d}=\Hom(\I_{z_k/X}, \O_{z_d})
  }
 given on the first summand by restriction $\O_{z_k}\to\O_{z_d}$ and on the second summand by
 dualizing the inclusion $\I_{z_k,X}\subset \I_{z_d, X}$. Clearly $\rho$ is surjective.
 Then $H^0(\ker(\rho))$ is the canonical version of $T_{d,k}$.
 Furthermore, $\ker(\rho)$, identified with the mapping cone of $\rho$ and right-shifted by 1, 
 has the structure of semi-simplicial Lie algebra, equivalent to a Lie atom
 (see \cite{atom}, \cite{sela}), and the deformation theory of this structure
 is the deformation theory of the pair $(z_d, z_k)$. In particular,
 obstructions lie in $H^1(\ker(\rho))=0$.
   \end{proof}
  \section{The secant and contact sheaves and associated deformations}\label{secant-sheaf-sec}
\subsection{Non-excess case} \label{non-excess-subsec}
Our purpose here is to prove
\begin{prop}\label{non-excess-prop}
 Let $L, X$ be closed submaniflds of a smooth variety or complex manifold, $Y$ intersecting in a 
 finite-length scheme $Z$  with ideal $\I_Z=\I_X+\I_L$.
 Then there   subsheaves and
  Lie subatoms \[N^\ct\subset N^\rms\subset N_{L/Y}=\Hom(\I_L, \O_L)\]
 called the contact and secant sheaves, respectively, whose formation
 commutes with passage to an open subset,
 which control deformation of $L$ in $Y$ inducing locally trivial (resp. flat) deformations of $L\cap X$.
 The quotients $N_{L/Y}/N^\rms, N_{L/Y}/N^\ct$ are $\O_Z$-modules. 
 \end{prop}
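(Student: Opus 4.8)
The plan is to construct $N^\rms$ and $N^\ct$ as kernels of two explicit $\O_Z$-valued maps out of $N_{L/Y}$ that read off, respectively, the failure of flatness and the abstract-deformation class of the intersection $Z=L\cap X$ as $L$ moves. Since the assertion is local and compatible with restriction to opens, I would work in a formal (or analytic) neighborhood of $Z$. A first-order deformation $\nu\in N_{L/Y}=\Hom(\I_L,\O_L)$ replaces $\I_L$ by $\{f+\epsilon\tilde\nu(f):f\in\I_L\}$ inside $\O_Y[\epsilon]/(\epsilon^2)$; keeping $\I_X$ fixed, this deforms $\I_Z=\I_L+\I_X$ and hence the finite scheme $Z$. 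The whole proof consists in extracting from $\nu$ the two deformation-theoretic invariants of this induced deformation of $Z$. Off $Z$ the intersection is empty and both conditions become vacuous, so there $N^\rms=N^\ct=N_{L/Y}$; this is what will force the quotients to be supported on, indeed modules over, $\O_Z$.

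First I would isolate the flatness obstruction. Define $\partial:N_{L/Y}\to\Hom_{\O_Z}((\I_L\cap\I_X)/\I_L\I_X,\O_Z)$ by $\partial_\nu(g)=\nu(g)\bmod\I_Z$ for $g\in\I_L\cap\I_X$; one checks this is well defined and $\O_Y$-linear using that $\nu$ kills $\I_L^2$ and that $b\,\nu(a)\equiv 0\pmod{\I_Z}$ whenever $b\in\I_X$. Identifying $(\I_L\cap\I_X)/\I_L\I_X\cong\tor_1^{\O_Y}(\O_L,\O_X)$ (an $\O_Z$-module, visibly the locus of excess), I set $N^\rms=\ker\partial$. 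The key lemma to prove is that $\partial_\nu=0$ is precisely the condition that the induced first-order deformation of $Z$ be flat, equivalently that it come from a genuine homomorphism $\I_Z/\I_Z^2\to\O_Z$, i.e. an element of $N_{Z/Y}$; this is the commutative-algebra heart of the statement and I expect it to be the main obstacle. Granting it, every $\nu\in N^\rms$ yields a class in $N_{Z/X}$ (the deformation stays inside the fixed $X$), and composing with the natural surjection $N_{Z/X}\to T^1_Z$ onto abstract deformations (the analogue for $X$ of the map in \S\ref{emb-def-sec}) gives a map whose kernel I define to be $N^\ct$; its elements are the deformations of $L$ that move $Z$ locally trivially, i.e. preserve the contact type. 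By construction $N^\ct\subset N^\rms\subset N_{L/Y}$.

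The $\O_Z$-module claim is then formal: $N_{L/Y}/N^\rms$ injects into $\Hom_{\O_Z}(\tor_1(\O_L,\O_X),\O_Z)$ and $N^\rms/N^\ct$ injects into $T^1_Z$, both annihilated by $\I_Z$, so $N_{L/Y}/N^\ct$ is an extension of $\O_Z$-modules. Finally, to promote these tangent-level subsheaves to genuine Lie subatoms that \emph{control} the constrained deformations, including higher obstructions, I would argue exactly as in Proposition \ref{pairs-prop}: realize $N_{L/Y}$ as the Lie atom governing deformations of $L$ in $Y$ in the sense of \cite{atom},\cite{sela}, and check that the flatness and local-triviality conditions are stable under the bracket, so that $N^\rms$ and $N^\ct$ are sub-Lie-atoms whose cohomology governs, respectively, flat and locally trivial deformations of $L\cap X$. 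The one genuinely new point beyond Proposition \ref{pairs-prop} is the flatness criterion isolating $N^\rms$; the remainder is bookkeeping with the exact sequences of normal sheaves already assembled in \S\ref{emb-def-sec}.
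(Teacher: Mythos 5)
Your construction is correct and essentially the paper's own: your condition $\partial_\nu=0$, i.e.\ $\nu(\I_L\cap\I_X)\equiv 0 \bmod \I_Z$, is identical to the paper's definition of $N^\rms$ as the homomorphisms carrying $\I_L\cap\I_X$ into $\I_X\cdot\O_L$, and both you and the paper define $N^\ct$ as the kernel of the composite $N^\rms\to N_{Z/X}\to T^1_Z$. The only cosmetic differences are that the paper realizes the controlled deformation space concretely as the zero scheme of the natural map $\I_1\to\O_2$ inside a product of Hilbert-scheme germs $D_1\times D_2$ (rather than via your bracket-stability argument for sub-Lie-atoms), and it phrases the $\O_Z$-module claim as $N^\rms, N^\ct\supseteq \I_Z N_{L/Y}$, which is equivalent to your injection of the quotients into $\O_Z$-modules.
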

\begin{proof} We define a the secant subsheaf 
 \[N^{\mathrm s}\subset N_{L/Y}=\Hom(\I_L. \O_L)\] 
 to consist
 of those homomorphisms that are compatible with $\I_X$, i.e. that map the subsheaf $\I_L\cap \I_X$
 to $\I_X.\O_L=\I_X/\I_X\cap\I_L$.
Thus the secant sheaf consists of (the triple of horizontal arrows in)
all commutative diagrams with exact columns
\eqspl{secant-eq}{
\begin{matrix}
0&&0\\
\downarrow&&\downarrow\\
\I_X\cap\I_L&\to&\I_X/\I_X\cap \I_L\\
\downarrow&&\downarrow\\
\I_L&\to&\O_L\\
\downarrow&&\downarrow\\
\I_L/\I_X\cap \I_L&\to&\O_Z\\
\downarrow&&\downarrow\\
0&&0
\end{matrix}
} 
Whereas $N_{L/Y}$ parametrizes local, first-order 
flat deformations of $L$ in $Y$, the secant subsheaf corresponds to those
deformations that induce a flat, i.e. length-preserving, deformation of the intersection scheme $Z$.
Thus $H^0(N^{\mathrm s})$ is the Zariski tangent space at $(L,Z)$ to the scheme $D^{\mathrm s}$ parametrizing such deformations,
while obstructions to such deformations are in $H^1(N^{\mathrm s}).$ Formally, $D^{\mathrm s}$ can
be defined as follows: let $D_1$ be  the germ at $[L]$  Hilbert scheme of $Y$(see \cite{sernesi}
\S 4.3) , 
and let $D_2$ be the germ at $[Z]$ of the Hilbert scheme of 
$X$. Then $D^{\mathrm s}\subset D_1\times D_2$ is the zero-scheme of the natural map
\[\I_1\to\O_2\]
where $\I_1$ is the universal ideal in $\O_Y$ and $\O_2$ is the universal length-$k$ quotient
pulled back from $X\sbr k.$.
A priori $D^{\mathrm s}$ parametrizes pairs $(Z'\subset L')$ such that
$Z'$ has the same length as $Z$ and $Z'\subset L'\cap X$ but by then semicontinuity, $Z'=L'\cap X$.\par
Assigning the triple
of horizontal arrows in\eqref{secant-eq} to the bottom arrow yields the natural map
\[\mathrm{res}_X:N^{\mathrm s}\to N_{Z/X}=\Hom(\I_{Z/X}, \O_Z)=\Hom(\I_L/\I_L\cap\I_X, \O_P/(\I_X+\I_L))\]
 which corresponds to the induced deformation of $Z$ as subscheme of $X$.
 We define the \emph{contact subsheaf} $N^{\mathrm c}\subset N^{\mathrm s}$ as the kernel of the composite map
 \eqspl{contact-eq}{N^{\mathrm s}\to N_{Z/X}\to T^1_Z.}
 The associated map $H^0(N^{\mathrm s})\to H^0(T^1_Z)$ is the derivative of the natural map of the deformation
 space $D^{\mathrm s}$ above to the abstract versal deformation of $Z$.
 Thus, $N^{\mathrm c}$ corresponds to
 the subscheme $D^{\mathrm c}\subset D^{\mathrm s}$ parametrizing
  deformations of $L$ which induce a \emph{locally trivial} deformation of $Z$.
 This will mainly be of interest when $Z$ has embedding dimension at most 1 or 2. Note that $N^{\mathrm s}$
 and $N^{\mathrm c}$,  in any event,  contain $\I_ZN_{L/P}$, hence $N/N^{\mathrm s}, N/N^{\mathrm c}$ are $\O_Z$-modules.\end{proof}


\subsection{Curvilinear non-excess case}\label{curvilin-subsec}
Our purpose here is to further analyze the secant and contact subsheaves
introduced above, in the case of curvilinear schemes. 
Set
\[ m=\dim(Y), c=\codim(X,Y), a=\dim(L), k=\l(Z), r=\#(\supp(Z)),\]
where $\l$ denotes length and $\#$ denotes cardinality.
Note $c\geq a$.
We will prove the following
\begin{prop}\label{curvilin-prop}
Notations as in Proposition \ref{non-excess-prop}, assume further that $Z$ is curvilinear.
Then the respective colengths of the secant and contact sheaves are given by
\eqspl{secant-contact-length-eq}{\l(N_{L/Y}/N^s)=k(c-a)\\
\l(N^s/N^c)=k-r.
}
\end{prop}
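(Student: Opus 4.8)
Since the quotients $N_{L/Y}/N^s$ and $N^s/N^c$ are $\O_Z$-modules (Proposition \ref{non-excess-prop}), their lengths are additive over the points of $\supp(Z)$, so the plan is to compute everything locally at a single point $p$ where $Z$ is punctual and curvilinear of length $k_p$; summing $\sum_p k_p=k$ and $\sum_p 1=r$ at the end produces the asserted values $k(c-a)$ and $k-r$. At each such $p$ the two formulas will come from an explicit description of $N^s$ in suitable coordinates, followed by the observation that $\mathrm{res}_X\colon N^s\to N_{Z/X}$ is onto.

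The first and most delicate step is to fix a normal form for the pair $(L,X)$ at $p$. Because $Z$ is curvilinear, $T_pZ=T_pL\cap T_pX$ is one-dimensional; choosing $x_1$ along it, completing to coordinates adapted to $L$, and using that the punctual curvilinear scheme $Z\subset L$ has ideal $(x_1^k,x_2,\dots,x_a)$, I would produce a regular system of parameters $x_1,\dots,x_m$ on $Y$ with
\[
\I_L=(x_{a+1},\dots,x_m),\qquad \I_X=(x_2,\dots,x_a,\,x_{a+1}-x_1^k,\,x_{a+2},\dots,x_{c+1}).
\]
The point is that, $L\cap X$ having length exactly $k$ with one-dimensional tangent, the contact of $X$ with $L$ is concentrated to order $k$ in the single normal direction $x_{a+1}$, while the remaining $c-a$ normal directions of $X$ (the common generators $x_{a+2},\dots,x_{c+1}$ of $\I_L$ and $\I_X$) meet $L$ transversally. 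Establishing this normal form — equivalently, controlling $\I_L\cap\I_X$ — is where I expect the real work to be: one must rule out more complicated contact in the excess normal directions using smoothness of $X,L$ and curvilinearity of $Z$.

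With the normal form in hand the secant computation is mechanical. Writing $\phi\in N_{L/Y}=\Hom(\I_L/\I_L^2,\O_L)$ as $\phi(x_i)=f_i\in\O_L$ on the generators $x_{a+1},\dots,x_m$ of $\I_L$, the secant condition $\phi(\I_L\cap\I_X)\subseteq\I_X\O_L=\I_{Z/L}$ must be tested on generators of $\I_L\cap\I_X$. I would first compute this intersection: the $c-a$ generators $x_{a+2},\dots,x_{c+1}$ lie in both ideals, and modulo them the remaining generators $x_2,\dots,x_a,x_{a+1}-x_1^k$ of $\I_X$ restrict on $\O_L$ to the regular sequence $x_2,\dots,x_a,-x_1^k$, so the relevant $\tor_1$ vanishes and $\I_L\cap\I_X$ equals $(x_{a+2},\dots,x_{c+1})$ together with the product ideal $\I_L\cdot\I_X$. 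Evaluating $\phi$ on the product generators yields only conditions of the shape $x_jf_i\in\I_{Z/L}$ with $2\le j\le a$, which hold automatically because $x_j\in\I_{Z/L}$; the surviving conditions come from the common generators and read $f_i\in\I_{Z/L}$ for $a+2\le i\le c+1$. Hence $N_{L/Y}/N^s\cong\bigoplus_{i=a+2}^{c+1}\O_L/\I_{Z/L}=\O_Z^{\oplus(c-a)}$, of length $k_p(c-a)$, giving the first formula.

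For the contact sheaf I would invoke the defining sequence $N^s\xrightarrow{\mathrm{res}_X}N_{Z/X}\to T^1_Z$ of \eqref{contact-eq}. The same coordinates show $\mathrm{res}_X$ is surjective: modulo $\I_{Z/L}N_{L/Y}$ it carries $\phi$ to the class of $(f_{a+1},f_{c+2},\dots,f_m)$ in $N_{Z/X}=\Hom(\I_{Z/X},\O_Z)$ (here $x_1^k\in\I_{Z/X}$ corresponds to $x_{a+1}$), and precisely these components are unconstrained in $N^s$, so $\mathrm{res}_X$ induces an isomorphism $\bar{N}^s\cong N_{Z/X}$. Since $N_{Z/X}\to T^1_Z$ is the canonical surjection for the curvilinear lci scheme $Z\subset X$, the composite $N^s\to T^1_Z$ is onto, whence $N^s/N^c\cong T^1_Z$. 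Finally $\l(T^1_Z)=\sum_p(k_p-1)=k-r$ by the description $T^1_z\cong\O_z/(x^{k_p-1})$ recalled in \S\ref{emb-def-sec}, yielding the second formula. The principal obstacle is the normal-form/intersection step; once it is secured, both colength computations are direct.
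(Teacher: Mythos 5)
Your proposal is correct and follows essentially the same route as the paper: localize at each point of $\supp(Z)$, put the pair $(L,X)$ in a local normal form (your $\I_X=(x_2,\dots,x_a,\,x_{a+1}-x_1^k,\,x_{a+2},\dots,x_{c+1})$ is the paper's $\I_X=(x_1^{k_1}+x_\eta,x_2,\dots,x_c)$ up to relabeling of indices), then read off that the only nonvacuous secant conditions are $f_i\in\I_{Z/L}$ for the $c-a$ generators common to $\I_L$ and $\I_X$ (giving colength $k(c-a)$), while the contact conditions cut out exactly $T^1_Z$ (giving colength $k-r$). The one step you flag as the real work --- establishing the normal form --- is precisely where the paper inserts its geometric input: since $Z$ is curvilinear it lies on a smooth curve $C\subset X$, which then has contact of order exactly $k_p$ with $L$; writing $C$ parametrically as $t\mapsto(t,t^{k_p},0,\dots,0)$ and straightening $X$ along $C$ produces the coordinates, so you may wish to route your normal-form argument through such a curve rather than through tangent-space considerations alone.
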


\begin{proof} We work locally at a point
where $Z$ has multiplicity $k_1>1$, as the case $k_1=1$ is similar and simpler. 
There, $Z$ is
contained in a smooth curve
$C\subset X$, which makes contact of order $k_1$ with $L$. We can write $C$ parametrically
as 
\[t\mapsto (y_1=t, y_2=t^{k_1},y_3=0,...,y_m= 0)\] 
where  $t$ is a coordinate on $C$,
$y_1,...,y_m$ are coordinates on $Y$, $y_1$ is a coordinate
on $L$ and the $y_2$ coordinate is conormal to $L$, i.e. $y_2=0$ on $L$.
Then we can find another set of local coordinates $x_1,..., x_m$ on $Y$ so that $L$ is defined
by $x_{a+1}=...=x_m=0$ and $\I_X$ is generated by
\[g_1=x_1^{k_1}+x_\eta, g_2=x_2, ...,g_c=x_c.\]
Note that $\I_X\cap\I_L$ is generated by $x_{a+1},...,x_c$.
Then $N_{L/Y}\otimes\O_Z$ is given locally by
\[ x_i\mapsto x_i+b_i, b_i\in\O_Z, i=a+1,...,m.\]
The subsheaf $N^{\mathrm s}\subset N_{L/Y}$ is
the unique subsheaf containing $\I_ZN_{L/Y}$ defined by the conditions, 
locally at every point of $\supp(Z)$, 
\[b_{a+1}=...=b_c=0\in\O_Z\]
while $N^{\mathrm c}\subset N^{\mathrm s}$ is 
the unique subsheaf containing $\I_ZN_{L/Y}$
defined by the additional condition in $\O_Z$:
\[b_N\equiv 0\mod x_1^{k_1-1}.\]
From this explicit description the dimension counts \eqref{secant-contact-length-eq} follow.
\end{proof}
\par 
\subsection{Curvilinear excess case}\label{excess}
Here notations are as in \S \ref{curvilin-subsec}, but we wish to consider motions of $L$ which keep just part
of the (curvilinear ) intersection $Z=X\cap L$. 
\begin{prop}\label{excess-prop}
Notations as in Proposition \ref{non-excess-prop}, assume
$L$ is a closed submanifold of a projective variety $P$ and that
$Y$ is an analytic open subset of $P$. Assume further that
 $Z=X\cap L$ is curvilinear, properly 
of type $(k.)$, let $(d.)\leq (k.)$ be a partition of weight $(d.)$, and let $W\subset Z$ be a
subscheme properly of type $(d.)$. Then\par
(i) there exists a sheaf $N^s_{L,W}$ supported on $L$, endowed with a Lie atom structure, and whose
formation commutes with localization, which controls flat deformations of the pair
$(L\subset Y, W\subset L\cap X)$;
\par (ii) there is a natural map $\gamma: N^s_{L,W}\to N_{L/Y}$ with finitely supported kernel
and cokernel, such that if $H^1(\im(\gamma))=0$ then the deformation space controlled by 
$N^s_{L, W}$ is smooth of its expected dimension, which is $h^0(N_{L/Y})-(c-a)d$;
\par
(iii) there is a subsheaf and subatom $N^{\ct}_{L, W} \subset N^\rms_{L,W}$ controlling deformations where
$W$ deforms locally trivially and if $H^1(\gamma(N^\ct_{L,W}))=0$ then the deformation space controlled by 
$N^\ct_{L,W}$ is smooth of its expected dimension, which is $h^0(N_{L/Y}-(c-a+1)d+\#(\supp(W))$.
\end{prop}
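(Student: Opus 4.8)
The plan is to transcribe the constructions of Propositions \ref{non-excess-prop} and \ref{curvilin-prop} ``relative to $W$'': everywhere the earlier arguments asked a motion of $L$ to preserve the whole intersection $Z$ flatly, I now ask only that it preserve the length-$d$ subscheme $W\subset Z$, which amounts to imposing the secant (resp.\ contact) conditions modulo $\I_W$ in place of modulo $\I_Z$. For part (i) I would define $N^\rms_{L,W}$ as the object governing the coupled deformation functor of the configuration $(L\subset Y,\ W\subset X,\ W\subset L)$. Following the mapping-cone recipe of Proposition \ref{pairs-prop}, take the surjection
\[ \rho\colon N_{L/Y}\oplus N_{W/X}\longrightarrow N_{L/Y}\otimes\O_W, \]
given on the first summand by restriction to $W$ and on the second by the composite $N_{W/X}\to N_{W/Y}\to N_{L/Y}\otimes\O_W$ reading off the component of the motion of $W$ normal to $L$; then $N^\rms_{L,W}:=\ker(\rho)$, identified with the mapping cone of $\rho$ right-shifted by $1$, carries the semi-simplicial Lie (atom) structure whose deformation theory is that of the pair, with obstructions in $H^1$. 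Its formation commutes with localization by construction.

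For part (ii) the map $\gamma\colon N^\rms_{L,W}\to N_{L/Y}$ is the projection forgetting the deformation of $W$. I would compute it in the local coordinates of Proposition \ref{curvilin-prop}: at a point where $Z$ has multiplicity $k_1$ and $W$ multiplicity $d_1\le k_1$, writing a section of $N_{L/Y}\otimes\O_Z$ as $x_i\mapsto x_i+b_i$, the image $\im(\gamma)$ is the subsheaf of $N_{L/Y}$ cut out by
\[ b_{a+1}\equiv\cdots\equiv b_c\equiv 0 \pmod{x_1^{d_1}}, \]
the remaining components $b_{c+1},\dots,b_m$ (among which is the tangency direction $b_\eta$) being unconstrained: the transverse conormal directions $x_{a+1},\dots,x_c$ of $X$ must vanish on $W$ in order to retain it, whereas a nonzero $b_\eta$ merely splits or slides $W$ and so still gives a flat deformation. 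Summing over $\supp(W)$ yields $\l(N_{L/Y}/\im\gamma)=(c-a)\sum d_1=(c-a)d$, so $\coker(\gamma)$ is finite of length $(c-a)d$; and $\ker(\gamma)$ --- the first-order moduli of length-$d$ subschemes of the \emph{fixed} intersection, i.e.\ the motions of $W$ inside $Z$ --- is supported on $W$, hence finite. This subsheaf $\im(\gamma)$ is precisely the tangent sheaf of the locus $D^\rms_{L,W}$ of flats retaining a flat length-$d$ secant piece near $W$. Because a finite-length sheaf has no $H^1$, the short exact sequences extracted from $\gamma$ give that $H^1(\im\gamma)=0$ forces both $H^1(N_{L/Y})=0$ and the vanishing of the obstruction space $H^1(N^\rms_{L,W})$; thus $D^\rms_{L,W}$ is smooth and the same sequences compute its dimension as $h^0(\im\gamma)=h^0(N_{L/Y})-\l(\coker\gamma)=h^0(N_{L/Y})-(c-a)d$.

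For part (iii) I would pass to the subatom $N^\ct_{L,W}\subset N^\rms_{L,W}$ imposing that the induced deformation of $W$ be locally trivial. In the local model this is the single further condition $b_\eta\equiv 0 \pmod{x_1^{d_1-1}}$ at each point of $\supp(W)$, the exact analogue (read modulo $\I_W$) of the condition $b_N\equiv 0\bmod x_1^{k_1-1}$ defining $N^\ct$ in Proposition \ref{curvilin-prop}. This adds $\sum(d_1-1)=d-\#\supp(W)$ to the colength, so $\gamma(N^\ct_{L,W})$ has colength $(c-a)d+\bigl(d-\#\supp(W)\bigr)=(c-a+1)d-\#\supp(W)$ in $N_{L/Y}$; repeating the obstruction and Euler-characteristic bookkeeping under $H^1(\gamma(N^\ct_{L,W}))=0$ gives smoothness of dimension $h^0(N_{L/Y})-(c-a+1)d+\#\supp(W)$.

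The routine part is the local length count, which is a direct transcription of Proposition \ref{curvilin-prop} with $\I_Z$ replaced by $\I_W$. The main obstacle lies in (i)--(ii): pinning down the Lie atom structure on the coupled deformation so that its obstructions genuinely live in a sheaf $H^1$, and then verifying that $\ker\gamma$ and $\coker\gamma$ are finite, which is what lets the smoothness criterion collapse onto the single group $H^1(\im\gamma)$. Some care is also needed to separate the pair-deformation space from the locus $D^\rms_{L,W}$ of flats: the finite kernel $\ker\gamma$ measures the vertical moduli of the piece $W$ along the fibres of $\gamma$, and it is the image $\im\gamma$, not $N^\rms_{L,W}$ itself, whose $H^0$ and $H^1$ compute the dimension and obstructions of the locus of $L$'s that the applications require.
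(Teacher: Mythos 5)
Your part (i) does coincide with the paper's construction (the same surjection $\rho_{L,W}:N_{L/Y}\oplus (i_W)_*N_{W/X}\to (i_W)_*i_W^*N_{L/Y}$, the same kernel with its semi-simplicial Lie/atom structure), but your local computation of $\im(\gamma)$ in part (ii) is wrong, and it is wrong at exactly the point where the excess case differs from the non-excess one. You claim the tangency component $b_\eta$ is unconstrained because ``a nonzero $b_\eta$ merely splits or slides $W$ and so still gives a flat deformation.'' At first order this is false: the compatibility condition at a point where $Z$ has multiplicity $k_1$ and $W$ has multiplicity $d_1$ is that $(h_1,b_\eta)$ lie in $T_{d_1,k_1}=T_0V_{d_1,k_1}$, the tangent space to the nested-pairs variety of Lemma \ref{pairs-lem}, and the projection $T_{d,k}\to V_k$ is \emph{not} surjective: its image is the space of polynomials divisible by $x^{\min(d,k-d)}$. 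Concretely, take $X=\{x_2=x_1^2\}$, $L$ the $x_1$-axis ($k_1=2$), $W$ the reduced origin ($d_1=1$), and deform $L$ to $L_t=\{x_2=tb(x_1)\}$: a flat deformation of $W$ inside $L_t\cap X$ over $\C[t]/(t^2)$ is a section $x_1=t\alpha$ of $\{x_1^2-tb(x_1)=0\}$, which forces $b(0)=0$. (The intersection point does survive set-theoretically, but it moves like $\sqrt{t}$ --- your ``sliding'' --- which is precisely not a first-order deformation.) Hence the colength of $\im(\gamma)$ in $N_{L/Y}$ is $(c-a)d+\sum_i\min(d_i,k_i-d_i)$, not $(c-a)d$; the paper gets this structurally from the snake lemma, which yields $0\to A\to N^{\rms}_{L,W}\to N_{L/Y}\to B\to 0$ with $\l_{p_i}(A)=\min(d_i,k_i-d_i)$ and $B\simeq A\oplus(c-a)\O_W$.

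Your dimension count then contains a second, compensating error: you take the dimension of the deformation space to be $h^0(\im\gamma)$, discarding $H^0(\ker\gamma)$. The space controlled by $N^{\rms}_{L,W}$ is the pair space, whose tangent space is $H^0(N^{\rms}_{L,W})$, an extension of $H^0(\im\gamma)$ by $H^0(A)$ (exact on the right since $A$ has finite length); its dimension is $h^0(\im\gamma)+\l(A)$. Because your undercount of $\l(\coker\gamma)$ by $\l(A)$ cancels against your omission of $h^0(A)=\l(A)$, your final figure $h^0(N_{L/Y})-(c-a)d$ agrees with the paper's, but both intermediate assertions are false, so the argument does not establish them, and it is the correct colength of $\im(\gamma)$ (not $(c-a)d$) that one needs when this Proposition is applied. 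The same defect recurs in (iii): local triviality is a condition on $h_1$, namely $h_1\equiv 0\bmod x_1^{d_1-1}$, and its effect on $\im(\gamma)$ is mediated by the coupling $(h_1,b_\eta)\in T_{d_1,k_1}$; it is not the naive condition $b_\eta\equiv 0\bmod x_1^{d_1-1}$ you impose. The single missing ingredient throughout is Lemma \ref{pairs-lem} and the tangent space \eqref{t-d-k-eq}: the ramification of $V_{d,k}\to V_k$ is exactly what makes deformations that keep only part of an intersection subtler than the non-excess case of Proposition \ref{curvilin-prop}.
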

\begin{proof}
Let $D^{\mathrm s}_{(d.)}$ as in \S \ref{non-excess-subsec} be the space of deformations 
$(W'\subset L')$ of $(W\subset L)$ within $Y$ so that $W'\subset X$. 
Let $D^{\mathrm c}_{(d.)}\subset D^{\mathrm s}_{(d.)}$ be the subspace where $W$ deforms locally trivially, i.e.
$W'$ is (abstractly) isomorphic to $W$.
Locally at a point $p_1\in W$, deformations of $L$ can be described as above by
\[ x_i\mapsto x_i+b_i, b_i\in\O_L, i=a+1,...,m.\]
Note that local coordinates on $X$ are $x_1, x_{c+1},...,x_{m-1}$
Deformations of $W$ in $X$ are described by
sections of the normal sheaf $N_{W/X}$, a free $\O_W$-module,
or more concretely by
\[x_1^{k_1}\mapsto x_1^{k_1}+h_1, x_j\mapsto x_i+h_i, i=c+1,...,m-1, h_i\in\sum\limits_{j=0}^{d-1}\C x_j.\]
The compatibility of the two means (cf. \S \ref{pairs})
\eqspl{excess-secant-eq0}{
b_j\equiv 0\mod \I_W, j=a+1,...,c;\\
b_j\equiv h_j \mod \I_W, j=c+1,...,m-1;\\
x_1^{k_1}+b_m\equiv 0\mod \I_W+ (x_1^{d_1}+h_1)
}
Identifying $\O_W$ with $V_{d_1}$ as in \S\ref{pairs}, the last condition means exactly
that $(h_1, b_m)\in V_{d_1, k_1}$. The corresponding condition on first-order deformations is
that $(h_1, b_m)$ should be tangent to $V_{d_1, k_1}$ at 0, i.e. (cf. \S \ref{pairs}, \eqref{t-d-k-eq})
\[(h_1, b_m)\in T_0V_{d_1, k_1}=T_{d_1, k_1}\] where as we have seen, $T_{d_1, , k_1}$ is a codimension-$d_1$
subspace  of $ V_{d_1}\oplus V_{k_1}$. 
We may use the second condition in \eqref{excess-secant-eq0} to eliminate  $h_j, j=c+1,...,m-1$.
Note that $(h_1, h_{c+1}, ...,h+{m-1})$ may be identified with
and element of $H^0(N_{W/X})$.
Thus, the Zariski tangent space $T^{\mathrm s}$ to $D^{\mathrm s}_{(d.)}$ can be identified
with the subspace of $H^0(N_{L/Y})\oplus H^0(N_{W/X})$ defined by the conditions
\eqspl{excess-secant-tangential-eq}{
b_j\equiv 0\mod \I_W, j=a+1,...,c;\\
b_j\equiv h_j \mod \I_W, j=c+1,...,m-1;\\
(h_1, b_m)\in T_{d_1, k_1}.
}
Canonically, as in \S\ref{pairs}, especially \eqref{rho-eq}, letting
$i_W:W\to L$ denote the inclusion, we have a canonical surjection
of sheaves over $L$
\eqspl{excess-secant-eq}{
\rho_{L,W}:N_{L/Y}\oplus (i_W)_*N_{W/X}\to (i_W)_*i_W^*N_{L/Y}
}
We set
\eqspl{}{
N^{\mathrm s}_{L,W}=\ker(\rho_{L,W})
.}
In the language of Lie-theoretic deformation theory (\cite{atom}, \cite{sela}), 
the sheaf $N^{\mathrm s}_{L,W}$, or equivalently
the complex \eqref{excess-secant-eq} (right-shifted by 1), has the structure of semi-simplcial Lie
algebra ( \sela), in fact one equivalent to a Lie atom, and the deformation theory of
the pair $(L, W)$ where $L\subset Y, W\subset X\cap L$, is the deformation
theory associated to the \sela structure. Thus, first-order deformations
are in $H^0(N^{\mathrm s}_{L,W})$ while obstructions are in $H^1(N^{\mathrm s}_{L,W})$.\par
Then
\eqspl{}{
T^{\mathrm s}=H^0(N^{\mathrm s}_{L,W})
} 
Applying the snake lemma to the diagram
\eqspl{}{
\begin{matrix}
0\to & N^{\mathrm s}_{L,W}&\to & N_{L/Y}\oplus (i_W)_*N_{W/X}&\to & (i_W)_*i_W^*N_{L/Y}\to 0\\
&\downarrow&&\downarrow&&\\
& N_{L/Y}&=&N_{L/Y},&&&
\end{matrix}
}
we get an exact sequence
\eqspl{}{
0\to A\to N^{\mathrm s}_{L.W}\to N_{L/Y}\to B\to 0
}
where $A,B$ are respectively the kernel and cokernel of the map
\[N_{W/X}\to i_W^*N_{L/Y}.\] 
By an elementary computation from the explicit description above, we get
\eqsp{
\l_{p_i}(A)=\min(d_i, k_i-d_i), 
B\simeq A\oplus (c-a)\O_W.
}
Thus
\eqspl{}{\l(A)=\sum \min(d_i, k_i-d_i), \l(B)=(c-a)d+\sum \min(d_i, k_i-d_i).}
We will denote the image of $N^{\mathrm s}_{L,W}\to N_{L/Y}$ by $N^{\mathrm s}_0$. Its colength is
the length of $B$. Because $A$ has finite length, we have an exact sequence
\eqspl{}{
\exseq{H^0(A)}{T^{\mathrm s}}{H^0(N^{\mathrm s}_0)}.
}
This implies that if $N^{\mathrm s}_0$ is 'unobstructed' in the sense that
$H^i(N^{\mathrm s}_0)=0, \forall i>0$, then $H^i(N_{L/Y})=0$ too and
$T^{\mathrm s}$ has its expected dimension, viz. $h^0(N_{L/Y})-(c-a)d$.
\par
Next, note the natural map 
\[ \tau:N^{\mathrm s}_{L,W}\to (i_W)_*(T^1_W)\]
which by the above local computations is surjective.
Set
\eqspl{}{
N^{\ct}_{L,W}=\ker(\tau)
} As the mapping cone of $\tau $, this again has the structure of \sela \cite{sela}, and the inclusion
$N^{\mathrm ct}_{L,W}\to N^{\mathrm s}_{L,W}$ is a \sela homomorphism.
Then
\eqspl{}{
T^{\mathrm c}=H^0(N^{\ct}_{L,W}).
}
Thus we have an exact diagram
\eqspl{}{
\begin{matrix}
&0&&0&&&&\\
&\downarrow&&\downarrow&&&&\\
0\to&N^{\ct}_{L,W}&\to&N_{L/Y}\oplus (i_W)_*N'_{W/X}&\to& (i_W)_*i_W^*N_{L/Y}&\to 0\\
&\downarrow&&\downarrow&&||&\\
0\to&N^{\mathrm s}_{L,W}&\to&N_{L/Y}\oplus (i_W)_*N_{W/X}&\to& (i_W)_*i_W^*N_{L/Y}&\to 0\\
&\downarrow&&\downarrow&&&\\
&(i_W)_*T^1_W&=&(i_W)_*T^1_W&&&\\
&\downarrow&&\downarrow&&&&\\
&0&&0&&&&
\end{matrix}
}
where  $N'_{W/X}$ denotes the 'locally trivial deformations' subsheaf,
i.e. the kernel of $N_{W/X}\to T^1_W$ or equivalently, the image of
$T_X\otimes\O_W\to N_{W/X}$.
By an easy computation, we have an exact sequence
\eqspl{excess-sec-eq}{
\exseq{H^0(A^{\ct})}{T^{\mathrm c}}{H^0(N_0^{\ct})}}
where $A^{\ct}\subset A^{\mathrm s}$ has local colength  0 if $\l_{p_i}(A^{\mathrm c})<d_i$
and 1 otherwise, and the same for the colength of 
$N_0^{\ct}\subset N_0^{\mathrm s}$. Again it follows that if $H^i(N^{\mathrm c}_0)=0, \forall i>0$, then
$T^{\mathrm c}$ has its expected dimension, viz $h^0(N_{L/Y})-(c-a+1)d+\#(\supp W)$.\par 
\end{proof}
   \section{Secant rational curve Theorem}\label{proof}

We will use the notation and definitions of \S \ref{secants-and-fillers-sec}. 
    \begin{thm}\label{point-projection-thm}
   Let: \par 
   $Y$ be a smooth analytic open subset
of an irreducible $m$-dimensional  projective variety $P$, $X\subset Y$ a smooth closed $n$-dimensional subvariety, $y\in Y$ a general point;\par $\gg$ be an analytic open subset of the Hilbert scheme of curves in $P$ 
parametrizing nonsingular rational curves contained in $Y$, $\L\to\gg$ the
tautological family, $L$ a fibre of $\L$ containing $y$ and
    $(k.)$ a partition of weight $k$.\par Assume that
     $L$ is a proper $(k.)$-secant to $X$. 
      \par Then:\par
     
     (i) for any partition $(l.)$ refining $(k.)$ with exactly $r$ nonzero blocks, 
      $S_{(l.), Y}$ has the expected dimension, namely
        \eqspl{dimension}{m-3-K_Y.L-kc+r,  c:=m-n,}
     and its singularities at $L$
     are the same up to a smooth factor as those of of $D_{(l.)}$  at $L\cap X$;
     in particular, $S_{k, Y}$ and $S_{(k.),Y}$ are smooth at $L$;\par
     (ii) for any partition $(d.)\leq (k.)$ and subscheme $W\subset L\cap X$
     properly of type $(d.)$,   $\tilde S_{(d.), Y}$
     is smooth near $(L, W)$ and the normalization of  $S_{(d.), Y}$ is smooth at $L$;\par
     (iii) if $L$ is general among $(k.)$-secants to $X$, then each point in the support of $L\cap X$
     is general on some component of $X$.
   
%
%
%
%
    \end{thm}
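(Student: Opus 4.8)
The plan is to reduce all of (i)--(iii) to one cohomological input, the vanishing $H^1(N^\ct)=0$ of the contact sheaf on $L\simeq\P^1$, and to extract that vanishing from the filling hypothesis by the uniformity principle. For (i), recall from Proposition~\ref{curvilin-prop} that $\I_ZN_{L/Y}\subset N^\ct\subset N^\rms\subset N_{L/Y}$ with finite-length quotients, so all three sheaves are locally free on the rational curve $L$; since $\l(N^\rms/N^\ct)=k-r=\dim T^1_Z$ and $N^\ct=\ker(N^\rms\to T^1_Z)$, the map is onto and we have $0\to N^\ct\to N^\rms\to T^1_Z\to 0$. As $T^1_Z$ is a skyscraper, $H^1(N^\ct)=0$ forces both $H^1(N^\rms)=0$ (so $\tilde S_{k,Y}$ and $S_{k,Y}$ are unobstructed, hence smooth at $(L,Z)$) and surjectivity of $H^0(N^\rms)\to H^0(T^1_Z)=T^1_Z$, which is precisely the differential $d\mu$. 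With $\tilde S_{k,Y}$ and $\Def(Z)$ smooth and $d\mu$ onto, $\mu$ is a smooth morphism at $(L,Z)$, and $\tilde S_{(l.),Y}=\mu\inv(D_{(l.)})$ then has, up to a smooth factor, the singularity of $D_{(l.)}$ at $Z$ (a smooth point for $(l.)=(k.)$, since the supports are distinct, a cusp-type point in general, as in \S\ref{emb-def-sec}). By smoothness of $\mu$ its dimension is $h^0(N^\rms)-\codim(D_{(l.)},\Def(Z))=h^0(N^\rms)-(k-r)$; feeding in $\deg N_{L/Y}=-K_Y.L-2$, rank $m-1$, and $\l(N_{L/Y}/N^\rms)=k(c-1)$ (note $a=\dim L=1$), Riemann--Roch gives $h^0(N^\rms)=m-3-K_Y.L-kc+k$, and the total collapses to $m-3-K_Y.L-kc+r$. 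Finally, over the locus where $L\cap X$ has length $k$, the weight of $(l.)$, the finite map $\tilde S_{(l.),Y}\to S_{(l.),Y}$ is an isomorphism (\S\ref{secants}), so these conclusions pass to $S_{(l.),Y}$.

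Everything thus rests on $H^1(N^\ct)=0$, the crux of the argument and the only place the filling hypothesis is used. The hypothesis supplies a proper $(k.)$-secant $L$ through the general point $y$, so the $(k.)$-secant family $S_{(k.),Y}$ is filling; by Lemma~\ref{filling-lem} it is infinitesimally filling along $L$, i.e. at a general $p\in L$ the deformations of $L$ within $S_{(k.),Y}$ surject, after evaluation at $p$, onto $(N_{L/Y})_p=T_pY/T_pL$. Taking $p\notin\supp(Z)$, a first-order deformation keeping $L$ a $(k.)$-secant induces a locally trivial deformation of $Z$, hence lies in $H^0(N^\ct)$, and at $p$ one has $N^\ct_p=(N_{L/Y})_p$; therefore the evaluation $H^0(N^\ct)\to N^\ct_p$ is surjective. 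Writing $N^\ct=\bigoplus\O(a_j)$ on $L\simeq\P^1$, surjectivity of evaluation at the single point $p$ forces every $a_j\geq 0$ --- a summand with $a_j<0$ has no sections and is missed by evaluation at every point --- so $N^\ct$ is globally generated and $H^1(N^\ct)=0$. This is exactly the passage from the first uniformity principle (secants behave the same off $X$) to the second (a generically spanned bundle on $\P^1$ is spanned, with vanishing $H^1$). I expect the genuine difficulty to be here: one must verify that the global, set-theoretic filling of $Y$ is faithfully recorded by the tangent map into $H^0(N^\ct)$ and correctly evaluated at $p$, which is the reason for the care taken in Definition~\ref{filling} and Lemma~\ref{filling-lem}.

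For (ii) I run the same machine in the excess setting of Proposition~\ref{excess-prop}. Because $(k.)$-secants are $(d.)$-secants, the $(d.)$-secant family is filling as well; applying Lemma~\ref{filling-lem} and the $\P^1$-spannedness argument to the image sheaf $N^\ct_0=\gamma(N^\ct_{L,W})\subset N_{L/Y}$ --- where $\gamma$ is an isomorphism off the finite set $\supp(W)$, so evaluation at a general $p$ is unaffected --- yields $H^1(N^\ct_0)=0$. Proposition~\ref{excess-prop}(iii) then gives that the space controlled by $N^\ct_{L,W}$, namely $\tilde S_{(d.),Y}$ near $(L,W)$, is smooth of the expected dimension $h^0(N_{L/Y})-cd+\#\supp(W)=m-3-K_Y.L-cd+\#\supp(W)$. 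Since $\tilde S_{(d.),Y}$ is smooth, and the finite map $\tilde S_{(d.),Y}\to S_{(d.),Y}$ is birational onto each branch (the branches indexing the choices of $W\subseteq L\cap X$ of type $(d.)$), $\tilde S_{(d.),Y}$ is the normalization of $S_{(d.),Y}$, which is therefore smooth at $L$.

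For (iii) I use the spannedness of $N^\ct$ directly. Fix a support point $p_i$ lying on a component $X_0$ of $X$, and let $\sigma_i\colon\tilde S_{(k.),Y}\to X_0$ record $p_i$ (well defined locally near $(L,Z)$, where the supports are distinct). Its differential factors as $H^0(N^\ct)\to N^\ct_{p_i}\to T_{p_i}X_0$, the first arrow being evaluation and the second the fibrewise support derivative. The first is onto since $N^\ct$ is globally generated. The second is onto by the local description of Proposition~\ref{curvilin-prop}: in those coordinates a local frame of $N^\ct$ at $p_i$ consists of the unrestricted tangential directions $\partial_{x_{c+1}},\dots,\partial_{x_{m-1}}$, which move $p_i$ in the $n-1$ directions of $X_0$ transverse to the contact curve $C$, together with the contact generator $x_1^{k_i-1}\partial_{x_N}$, whose image is the remaining direction along $C$; these span $T_{p_i}X_0$. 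Hence $d\sigma_i$ is surjective, $\sigma_i$ is dominant, and for $L$ general among $(k.)$-secants the point $p_i=\sigma_i(L)$ is general on $X_0$; equivalently, were all $p_i$ confined to a proper closed $X_0'\subsetneq X_0$ we would get the contradiction $\sigma_i\inv(X_0')=\tilde S_{(k.),Y}$. The one delicate point is surjectivity onto the along-$C$ direction, which relies on $x_1^{k_i-1}\partial_{x_N}$ being an honest local generator of the globally generated sheaf $N^\ct$, not merely a higher-order jet that global generation would fail to produce.
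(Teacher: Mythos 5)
Your proposal is correct and takes essentially the same route as the paper's proof: the filling hypothesis plus Lemma \ref{filling-lem} gives generic spannedness of $N^\ct$ (and of $\im(\gamma)$ in the excess case), whence on $L\simeq\P^1$ these sheaves split as direct sums of nonnegative line bundles, killing $H^1$ and making the classifying map to $\Def(Z)$ smooth, with (ii) handled via Proposition \ref{excess-prop} and the unramifiedness of $\tilde S_{(d.),Y}\to S_{(d.),Y}$, and (iii) via global generation of $N^\ct$, exactly as in the text. Your extra details---the Riemann--Roch dimension count and the explicit support-map computation for (iii)---merely flesh out steps the paper leaves implicit.
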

    \begin{rem}
    The number \eqref{dimension} is the expected dimension of $S_{(l.), Y}$ because
    for a proper $(l.)$-secant,  it is the Euler
    characteristic of the relevant normal sheaf (atom), namely $N^\ct$,  controlling contact-preserving deformations,
    while non-proper $(l.)$-secants are 'expected' to be-and in fact are, by the following Corollary-
    specializations of proper ones.
    \end{rem}
    \begin{cor}
    For the general memeber $L$ of any component of $S_{(m.),Y}$, $L\cap X$ is properly of type $(m.)$.
    \end{cor}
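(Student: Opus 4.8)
The plan is to read the Corollary, as the preceding Remark suggests, as the assertion that every improper $(m.)$-secant is a \emph{specialization} of proper ones, and to produce the specializing family by applying Proposition \ref{excess-prop}(iii) to a \emph{partial} intersection. Write $k=\sum m_i$, let $r$ be the number of blocks of $(m.)$, set $c=\codim(X,Y)$, and let $S^\circ_{(m.),Y}\subset S_{(m.),Y}$ denote the locus of proper $(m.)$-secants. Since $\l(L\cap X)$ is upper semicontinuous and equals $k$ exactly on $S^\circ_{(m.),Y}$ — where in addition the $r$ support points are distinct, an open condition — the set $S^\circ_{(m.),Y}$ is open in $S_{(m.),Y}$. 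It therefore suffices to prove that it is dense, i.e. that every $L_0\in S_{(m.),Y}$ lies in $\overline{S^\circ_{(m.),Y}}$.

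First I would dispose of the members with $\l(L_0\cap X)=\infty$: these are curves contained in $X$, which lie in the closure of the finite-length secant locus (a general deformation moves $L_0$ off the proper subvariety $X$) and may be set aside as a routine boundary case. For a finite-length improper $L_0$ put $Z_0=L_0\cap X$, of cycle type $(l.)$. The combinatorics of curvilinear schemes shows that a length-$k$ subscheme $W_0\subset Z_0$ of type $(m.)$ forces $(l.)$ to dominate $(m.)$ blockwise — distinct support points carrying multiplicities at least the $m_i$ — so that $l:=|(l.)|\ge k$, with equality only when $(l.)=(m.)$, i.e. in the proper case. Thus improperness means genuine excess, $l>k$, and we may fix such a $W_0\subset Z_0$.

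Next I would apply Proposition \ref{excess-prop}(iii) to the pair $(L_0,W_0)$ with $d=k$. The contact atom $N^\ct_{L_0,W_0}$ governs deformations of $(L_0,W_0)$ that keep $W_0\subset X$ and deform $W_0$ locally trivially; once the hypothesis $H^1(\gamma(N^\ct_{L_0,W_0}))=0$ is verified, the associated deformation space $D^{\mathrm c}_{(m.)}$ is smooth of expected dimension $h^0(N_{L_0/Y})-(c-a+1)k+\#\supp(W_0)=h^0(N_{L_0/Y})-ck+r$, using $a=\dim L_0=1$. Inside the smooth, hence irreducible, germ $D^{\mathrm c}_{(m.)}$ at $(L_0,W_0)$, the locus on which the ambient intersection strictly exceeds $W_0$ is, near the centre, a finite union of images of the analogous spaces $D^{\mathrm c}_{(m'.)}$, over the finitely many $(m'.)$ with $(m.)$ properly dominated by $(m'.)$ and $(m'.)$ dominated by $(l.)$; each such space is smooth of dimension $h^0(N_{L_0/Y})-c\,|(m'.)|+r(m'.)$, and the elementary inequality $c\bigl(|(m'.)|-k\bigr)>r(m'.)-r$ — which holds for all such $(m'.)$ precisely because $c\ge 2$ — shows this dimension to be strictly smaller. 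Hence the proper $(m.)$-secants are dense in $D^{\mathrm c}_{(m.)}$; projecting $(L',W')\mapsto L'$ to $\gg$, the central point maps to $L_0$ while the dense proper locus maps into $S^\circ_{(m.),Y}$, giving $L_0\in\overline{S^\circ_{(m.),Y}}$ and completing the argument.

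The hard part is the vanishing $H^1(\gamma(N^\ct_{L_0,W_0}))=0$, on which both the smoothness and the dimension count rest: this is exactly where the global filling hypothesis must be converted, via the $\P^1$ uniformity principle, into infinitesimal data. The sheaf $\gamma(N^\ct_{L_0,W_0})$ is a finite-colength subsheaf of the normal bundle of the rational curve $L_0\cong\P^1$, hence a vector bundle on $\P^1$; filling makes it generically spanned, and a generically spanned bundle on $\P^1$ is spanned everywhere and so has no higher cohomology — exactly as in the proof of Theorem \ref{point-projection-thm}. (Equivalently, the vanishing is inherited from the unobstructedness of $L_0$ as a proper $(l.)$-secant: if $H^1(\gamma(N^\ct_{L_0,Z_0}))=0$ then, $\gamma(N^\ct_{L_0,Z_0})\subset\gamma(N^\ct_{L_0,W_0})$ having finite-length quotient, $H^1$ of the larger sheaf is a quotient of $0$.) A secondary obstacle is the strict dimension drop, which needs the conormal room $c>a$, i.e. $\codim X\ge 2$: concretely the excess is removed by moving $L_0$ in a conormal direction not already used to cut out the contact, say $x_j=t\,x_1^{m_i}$ in suitable local coordinates, lowering the local intersection length from $l_i$ to $m_i$ while keeping $W_0\subset X$; in the divisorial case $c=1$ the excess length is conserved and the statement can genuinely fail. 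Finally one checks by semicontinuity and properness, for small $t$, that no new intersection with $X$ appears away from $\supp(Z_0)$.
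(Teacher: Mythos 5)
Your proposal takes a genuinely different route from the paper, and it has two genuine gaps. The paper argues only at the \emph{general} member $L$ of the component: writing $(k.)$ for the proper type of $L\cap X$, the component lies near $L$ inside $S_{(k.),Y}$, which by Theorem \ref{point-projection-thm}(i) has its expected dimension there, and if $(k.)\neq(m.)$ the expected codimension $(\sum k_i)c-\#\mathrm{blocks}(k.)$ strictly exceeds $(\sum m_i)c-\#\mathrm{blocks}(m.)$, contradicting the lower bound (from the determinantal structure of $\tilde S_{(m.),Y}$) on the dimension of any component of $S_{(m.),Y}$. You instead try to smooth an \emph{arbitrary} improper member $L_0$. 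The first gap is combinatorial: improperness does not imply excess. Blocks of $(m.)$ may collide at a single support point of $Z_0=L_0\cap X$: for $(m.)=(1,1)$, a tangent line has $Z_0$ a double point, of type $(1,1)$ but not properly, with $\l(Z_0)=k$; so your assertion that $l=k$ forces $(l.)=(m.)$ is false. In this collision case --- and even in excess cases such as a triple point against $(m.)=(1,1)$ --- the scheme $Z_0$ contains \emph{no} subscheme properly of type $(m.)$, so Proposition \ref{excess-prop}, which requires $W$ properly of type $(d.)$, cannot be applied as you propose. Worse, part (iii) of that proposition is structurally the wrong tool: contact deformations move $W$ locally trivially, hence can never separate collided points. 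Separating collisions is exactly what the flat (secant-sheaf) deformations and the smoothness of the classifying map $\mu$ to $\Def(Z)$ accomplish in part (i) of the Theorem.

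The second gap sits where you yourself locate ``the hard part'': the vanishing $H^1(\gamma(N^\ct_{L_0,W_0}))=0$. Lemma \ref{filling-lem} converts filling into generic spannedness only along members passing through a \emph{general} point of $Y$; an arbitrary improper $L_0\in S_{(m.),Y}$ need not pass through one, and that is precisely the situation in which secant families can be obstructed and oversize (cf.\ the $(2,d)$ complete-intersection example following the Theorem). Your parenthetical fallback --- inheriting the vanishing from unobstructedness of $L_0$ as a proper $(l.)$-secant --- has the same defect, since that unobstructedness again requires the Theorem at $L_0$. This is exactly why the paper's proof is a dimension count at the general member only, where the required hypotheses hold. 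On the credit side, your remark that the statement needs $c\geq 2$ is correct (for a conic $X\subset\P^2$ and $(m.)=(1)$ the conclusion fails); the paper glosses over this, as its list of operations producing $(k.)$ from $(m.)$ omits the addition of new blocks --- the one operation whose codimension increment, $bc-1$ for a new block of size $b$, can vanish when $c=1$.
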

    \begin{proof}[Proof of Corollary] Dimension count. More specifically, let
    $(k.)$ be the unique partition such that $L\cap X$ is properly
    of type $(k.)$. Then the partition $(k.)$ is obtained from $(m.)$ by a succession of\par
    - removing a wall, i.e. uniting two blocks, thus preserving total weight while reducing
    the number of blocks;\par
    - increasing the size of a block, thus increasing total weight and preserving cardinality
    of support.\par
    Hence if $(m.)\neq (k.)$, then $(\sum m_i)c-\# ({\mathrm{blocks\ of}} (m.))
    <(\sum k_i)c-\# ({\mathrm{blocks\ of}} (k.))$,
    so $L$ cannot be general.
    \end{proof}
    \begin{rem}
    A typical non-algebraic situation covered by the Theorem is where $Y$ is a tubular (analytic)
    neighborhood of a smooth complete rational curve in $P$, e.g. a projective space, and $X$ is a union of 
    some - not all- of the branches
    of an algebraic variety intersected with $Y$.
    \end{rem}
   
  \begin{proof}[Proof of Theorem] Let notations be as in the Theorem, and consider
  the secant and contact sheaves $N^\ct\subset N^\rms\subset N_{L/Y}$ as in
  Proposition \ref{non-excess-prop}. Because $y\in Y$ is general, the
  family of proper $(k.)$-secants to $X$ is filling for $Y$, and Lemma \ref{filling-lem}
  shows that $N^\ct$ is generated by global sections at $y$. Therefore $N^\ct$ is generically
  spanned, hence $N^\ct, N^\rms$ and $N_{L/Y}$ are all direct sums of line bundles
  of of nonnegative degree. This implies first that $S_{k, Y}$ is smooth at $L$ with 
  tangent space $H^0(N^\rms)$, and then that the local classifying morphism $\kappa$
  of $S_{k,Y}$ to 
  the abstract deformation space $\Def(z)$ is smooth at $L$. Because $S_{(l.),Y}=\kappa\inv(D_{(l.)})$,
  assertion (i) follows.\par
  Asserion (iii) follows from the fact that $N^\ct$ is globally generated.\par
  For (ii) we will similarly use Proposition \ref{excess-prop} (and its
  notations)  in lieu of \ref{non-excess-prop}. Our assumptions
imply that $\im(\gamma)$ is a direct sum of line bundles of nonnegative degree,
hence again $H^1(\im(\gamma))=0$ so we conclude as before the smoothness of $\tilde S_{(d,), Y}$
at $(L, W)$ for all possible subschemes $W$.  Because $W$ is properly 
of type $(d.)$, the fibre of the projection $\tilde S_{(d.), Y} \to S_{(d.), Y}$
through $(L, W)$ is the reduced point  $[W]$, the projection is unramified,
hence the normalization is smooth.
\end{proof}
\begin{rem}
As the referee points out, the argument above using Lemma \ref{filling-lem} replaces an inductive
argument used in a similar place by Gruson and Peskine in \cite{grp}.
\end{rem}
\begin{example}
As an obvious example, one can consider a $(2,d)$ complete intersection $X\subset \P^{n+2}, d\geq 3$.
Is has an oversize family of $d$-secants, namely the lines in the quadric, which
form a $(2n-1)$-dimensional family. Of course, this family is not filling for $\P^{n+2}$.
\end{example}
\begin{example}
Consider a plane $X=\P^2\subset\P^3$ and a transverse twisted cubic $L\subset\P^3$.
Viewing things in $\P^5$, let $Y$ be a generic quadric containing $X\cup L$. This is
easily checked to be smooth. The normal bundle $N_{L/Y}$ is of type $(4,3,3)$
and the secant bundle with respect to $X$ is of type $(3,2,2)$. The family of trisecant
twisted cubics to $X$ in $Y$ is unobstructed of dimension 14.
\end{example}
\begin{example}
For the (smooth) projected Veronese $X\subset\P^4$, there is a unique trisecant line $L$ through
a general point of $\P^4$ (see \cite{grp}, \S 5). Each point of $L\cap X$ is general on $X$.
\end{example}

 \section{Planar fibres}
\subsection{Statement}
  The main difficulty in extending Theorem \ref{point-projection-thm} to higher-dimensional
   secant flats and projections from (generic) higher-dimensional centers lies with the complexity
   of the finite schematic intersection $X\cap L$, which a priori is as ill-behaved as any finite scheme.
  This difficulty is still manageable when
 $X\cap L$ is locally planar, i.e. has embedding dimension 2 or less
 (e.g $L$ itself is 2-dimensional). This is due to Fogarty's theorem \cite{fogarty}
   about the smoothness of the Hilbert scheme of a smooth surface, and its consequence,
 Lemma \ref{length-lem} below, which shows that the secant sheaf $N^{\mathrm s}$ has
 the expected colength in $N_L$. Accordingly, we are able to prove
   Theorem \ref{line-projection-thm} below, which extends Theorem \ref{point-projection-thm}, essentially,
for ambient space $\P^N$ and locally planar fibres, such
as those which occur  upon projection from a generic line $\Lambda$ (i.e. have
the form $X\cap L$ where $L$ is a plane containing  $\Lambda$).
'Essentially' means we are able to control the locus of fibres of given length but not those of given
cycle type. Note, as a matter of terminology, that by
'locus of fibres' of a map we mean locus of point-images of fibres, a locus in
the \emph{target} of the map.\par For a subvariety $X\subset\pp^m$ and a linear $\lambda $-plane
$\Lambda$
disjoint from $X$, we denote by $X^\Lambda_k\subset\pp^{m-\lambda-1}$ the locus
of fibres of length $k$ or more of the projection
\[\pi_\Lambda:X\to\pp^{m-\lambda-1}.\]
Thus, $X^\Lambda_k$ is the locus of $(\lambda+1)$-planes 
containing $\Lambda$ and meeting $X$ in a scheme of length $k$ or more.
We denote by $\pi_\lambda, X^\lambda_k$ the analogous objects corresponding to the generic $\Lambda$.
 \begin{thm}\label{line-projection-thm}
 Let $X\subset \P^m$ be an irreducible closed subvariety of codimension $c>\lambda\geq 0$. Then  
 $X^\lambda_k$ is smooth of codimension
 $k(c-\lambda-1)$ in $\P^{m-\lambda-1}$, in a neighborhood of any
 point image of a  fibre of length exactly $k$ that is disjoint from
 the singular locus of $X$ and has embedding dimension 2 or less.
 \end{thm}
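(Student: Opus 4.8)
The plan is to recognize $X^\lambda_k$ as a secant locus in the sense of \S\ref{secants-and-fillers-sec} and then run the cohomological machinery of Theorem \ref{point-projection-thm}, with the rational curve replaced by a linear flat and with Fogarty's theorem supplying the structural input that the explicit curvilinear calculations of \S\ref{curvilin-sec} provided before. Fix a generic $\lambda$-plane $\Lambda$ disjoint from $X$ (possible since $c>\lambda$); the $(\lambda+1)$-planes $L$ containing $\Lambda$ are parametrized by $\P^{m-\lambda-1}$, and under this identification $X^\Lambda_k$ is exactly the secant locus $S_k$ of such flats meeting $X$ in length $\geq k$. I would fix a point $[L]$ for which $Z=L\cap X$ has length exactly $k$, embedding dimension $\leq 2$, and $\supp(Z)\cap\mathrm{Sing}(X)=\emptyset$, and study the germ of $X^\Lambda_k$ at $[L]$ through the ($\Lambda$-relative) secant sheaf $N^\rms\subset N_{L/\P^m}$ of Proposition \ref{non-excess-prop}, whose global sections give the Zariski tangent space and whose $H^1$ carries the obstructions. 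Since $Z$ avoids $\mathrm{Sing}(X)$, the smoothness hypotheses of \S\ref{secant-sheaf-sec} apply near $Z$.

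The first step is the length computation $\l(N_{L/\P^m}/N^\rms)=k(c-\lambda-1)$ (with $a=\dim L=\lambda+1$, this is the planar analogue of Proposition \ref{curvilin-prop}, where the curvilinear value was $k(c-a)$). This is where the embedding-dimension hypothesis is indispensable: because $Z$ has embedding dimension $\leq 2$ it lies in a smooth surface, so by Fogarty's theorem \cite{fogarty} the Hilbert scheme of length-$k$ subschemes of that surface is smooth. This both endows $S_k$ with a scheme structure near $[L]$ and pins the colength of the secant sheaf to its expected value, which is the content of Lemma \ref{length-lem}. For curvilinear $Z$ this was elementary; for genuinely planar $Z$ the smoothness of the surface Hilbert scheme is the substitute for the explicit factorizations used earlier.

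The second step is obstruction vanishing, $H^1(N^\rms)=0$. Because $L$ is linear, the normal sheaf governing deformations of $L$ through $\Lambda$ has vanishing $H^1$, so from $\exseq{N^\rms}{N_{L/\P^m}}{Q}$ with $Q$ finitely supported on $Z$ of length $k(c-\lambda-1)$, one reduces $H^1(N^\rms)=0$ to surjectivity of $H^0(N_{L/\P^m})\to Q$, that is, to the statement that the $k(c-\lambda-1)$ secant conditions are imposed independently. Here I would invoke the filling hypothesis: for generic $\Lambda$ the $k$-secant $(\lambda+1)$-planes fill $\P^m$, so the family of pairs $(\Lambda,L)$ is filling with respect to $\Lambda$ in the sense of Definition \ref{inf-filling-higher}, and Lemma \ref{filling-higher-lem}, together with the uniformity principle, is meant to force exactly this independence at the given $[L]$. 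Granting it, $X^\Lambda_k$ is smooth at $[L]$ with tangent space of codimension $\l(Q)=k(c-\lambda-1)$; the passage from a fixed generic $\Lambda$ to the generic-center object $X^\lambda_k$ is then formal.

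The main obstacle is precisely this last step. In Theorem \ref{point-projection-thm} the flat was a $\P^1$, where a generically spanned bundle is automatically spanned with vanishing $H^1$; that clean dichotomy is unavailable on $L\cong\P^{\lambda+1}$ once $\lambda\geq 1$, where generic global generation alone does \emph{not} control $H^1$. The real difficulty is to rule out a jump in $h^1(N^\rms)$ at a \emph{special} flat $[L]$ whose fibre has length exactly $k$ --- upper semicontinuity runs the wrong way, so generic smoothness over the space of centers is not by itself enough. This is where Fogarty's theorem must do double duty: it identifies the germ of $X^\Lambda_k$ at $[L]$, up to a smooth factor, with a smooth stratum of the surface Hilbert scheme, and it is the combination of this smoothness with the uniformity principle that I expect to replace the automatic $\P^1$ argument and yield the independence of the secant conditions at the special point.
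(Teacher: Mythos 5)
Your framing and first step coincide with the paper's: you identify $X^\lambda_k$ with the secant locus of $(\lambda+1)$-planes through a generic $\Lambda$, invoke the secant sheaf $N^\rms\subset N_{L/\P^m}$ of Proposition \ref{non-excess-prop}, compute its colength $k(c-\lambda-1)$ via Fogarty's theorem (this is precisely Lemma \ref{length-lem}), and reduce smoothness at $[L]$ to $H^1(N^\rms)=0$, i.e.\ to independence of the secant conditions, using $H^1(N_{L/\P^m})=0$. You even name the correct hypothesis to feed in: filling with respect to $\Lambda$, in the sense of Definition \ref{inf-filling-higher} and Lemma \ref{filling-higher-lem}.

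The gap is the step you yourself flag as ``the main obstacle'': you have no argument for $H^1(N^\rms)=0$ on $L\cong\P^{\lambda+1}$, and the substitute you propose --- Fogarty's theorem ``doing double duty'' by identifying the germ of $X^\Lambda_k$ with a smooth stratum of a surface Hilbert scheme, plus the uniformity principle --- does not work and is essentially circular: transferring smoothness of the Hilbert scheme of $Z$ to smoothness of the locus of flats is exactly what unobstructedness (the $H^1$-vanishing) is needed for, so it cannot be its source. The paper's actual mechanism is Lemma \ref{vanishing-lem}, which owes nothing to Fogarty; it exploits the fact that $\Lambda$ is a \emph{hyperplane} in $L$. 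Since $Q=N_{L/\P^m}/N^\rms$ is supported on $Z$, which is disjoint from $\Lambda$, one has $N^\rms\otimes\O_\Lambda=N_{L/\P^m}\otimes\O_\Lambda\cong(m-\lambda-1)\O_\Lambda(1)$, and the natural map $H^0(N_{\Lambda/\P^m})\to H^0(N^\rms\otimes\O_\Lambda)$ is surjective. Infinitesimal filling with respect to $\Lambda$ says that first-order motions of $\Lambda$ extend to first-order motions of the secant pair, so the image of this map is contained in the image of the restriction $H^0(N^\rms)\to H^0(N^\rms\otimes\O_\Lambda)$; hence that restriction is surjective. Because $H^0(N^\rms(s)\otimes\O_\Lambda)$ is generated by $H^0(N^\rms\otimes\O_\Lambda)\cdot H^0(\O_\Lambda(s))$, the surjectivity persists for every twist $s\geq 0$, so the sequences
\[
\exseq{N^\rms(s-1)}{N^\rms(s)}{N^\rms(s)\otimes\O_\Lambda}
\]
yield injections $H^1(N^\rms(s-1))\hookrightarrow H^1(N^\rms(s))$ for all $s\geq 0$; chaining these and using $H^1(N^\rms(s))=0$ for $s\gg 0$ gives $H^1(N^\rms(t))=0$ for all $t\geq -1$. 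This restriction-and-twist bootstrap is what replaces the $\P^1$ dichotomy, and it applies at an \emph{arbitrary} flat $L$ through the generic $\Lambda$ because Lemma \ref{filling-higher-lem} provides tangent-space surjectivity at arbitrary points lying over a general center --- exactly the ``special $[L]$'' worry you raise but do not resolve.
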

 \begin{rem}
 Note that the smoothness assertion of the Theorem applies to any fibre of length exactly $k$, including non-reduced ones.
 \begin{rem}
 The local planarity hypothesis is of course automatic when $\lambda\leq 1$ with the case $\lambda=0$ 
 being due already to Gruson-Peskine
 \cite{grp}. Alzati \cite{alzati} applied the Gruson-Peskine 
 theorem to $k$-normality of codimension-3 subvarieties of $\P^m$.
 Subsequently, Alzati (pers. comm.) was able to apply Theorem \ref{line-projection-thm},
 together with the method of \cite{alzati} to obtain a stronger result
 on $k$-normality, namely: a smooth, codimension-3 subvariety $X$ of degree $d$ and
 dimension 3 or more in $\P^m$ is $k$-normal provided \[k\geq d+1+(m-1)(m-2)(m-6)/3.\]
 \end{rem}
 \end{rem}
 \begin{cor}\label{planar-cor}
 Notations as in Theorem \ref{line-projection-thm}, assume
 \eqspl{ell}{
 \lambda<\min(c, c+2-n/3).
 } Then all fibres of $\pi_\lambda$ are planar, and
   for any $k\geq 1$, the locus of (point fibres of) fibres of $\pi_\lambda$ of length  $k$ is smooth of codimension
  $k(c-\lambda-1)$ in $\P^{m-\lambda-1}$, in a neighborhood of any fibre of length exactly $k$ that is disjoint from
  the singular locus of $X$.
 \end{cor}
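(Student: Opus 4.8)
The plan is to derive the Corollary directly from Theorem~\ref{line-projection-thm}: I will show that hypothesis \eqref{ell} forces every fibre of the generic projection $\pi_\lambda$ to have embedding dimension $\le 2$ at the points where $X$ is smooth, so that the local-planarity hypothesis of Theorem~\ref{line-projection-thm} becomes automatic and its conclusion applies for every $k$. Note first that since $\lambda<c$, a generic $\lambda$-plane $\Lambda$ is disjoint from $X$, so $\pi_\Lambda$ is a genuine morphism on $X$; this is the role of the first term in $\min(c,\,c+2-n/3)$.

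The key reduction is a pointwise translation of planarity into a tangency condition between the tangent planes of $X$ and the center. Fix a point $p$ in the smooth locus $X_{\mathrm{sm}}$, and let $L=\langle\Lambda,p\rangle$ be the $(\lambda+1)$-plane through $\Lambda$ and $p$, so the fibre of $\pi_\Lambda$ through $p$ is $X\cap L$ near $p$. Because $X$ is smooth at $p$ and $L$ is linear, the Zariski tangent space of the scheme $X\cap L$ at $p$ is cut out by the linear parts of $\mathcal I_X$ and $\mathcal I_L$, hence equals $\mathbb T_pX\cap L$, where $\mathbb T_pX$ is the embedded projective tangent $n$-plane. Since $p\in\mathbb T_pX$, the projective modular law gives
\[
\mathbb T_pX\cap L=\mathbb T_pX\cap\langle\Lambda,p\rangle=\langle\,\mathbb T_pX\cap\Lambda,\ p\,\rangle,
\]
and as $p\notin\Lambda$ this has dimension $\dim(\mathbb T_pX\cap\Lambda)+1$. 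Thus the embedding dimension of the fibre at $p$ equals $\dim(\mathbb T_pX\cap\Lambda)+1$, and the fibre fails to be planar at $p$ exactly when $\dim(\mathbb T_pX\cap\Lambda)\ge 2$. This computation is valid for arbitrary, possibly non-reduced, fibres, since it involves only first-order data.

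It therefore suffices to show that, under \eqref{ell}, a generic $\Lambda$ meets none of the tangent planes $\{\mathbb T_pX:p\in X_{\mathrm{sm}}\}$ in dimension $\ge 2$. I would do this by a dimension count on the incidence variety
\[
I=\{(\Lambda,p)\in G\times X_{\mathrm{sm}}:\dim(\mathbb T_pX\cap\Lambda)\ge 2\},\qquad G=\mathbb G(\lambda,m).
\]
For fixed $p$ the fibre of $I\to X_{\mathrm{sm}}$ is the Schubert locus of $\lambda$-planes meeting the fixed $n$-plane $\mathbb T_pX$ in projective dimension $\ge 2$; applying the standard Schubert codimension formula $s(N-w-k+s)$ to $(\lambda+1)$- and $(n+1)$-dimensional subspaces of $\mathbb C^{m+1}$ with $s=3$ gives codimension $3(c-\lambda+2)$ in $G$. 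Hence $\dim I=n+\dim G-3(c-\lambda+2)$, and the first projection $I\to G$ is non-dominant precisely when $n-3(c-\lambda+2)<0$, i.e. when $\lambda<c+2-n/3$, which is the second term of \eqref{ell}. For such generic $\Lambda$, no smooth point of $X$ lies on a nonplanar fibre.

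Finally I would invoke Theorem~\ref{line-projection-thm}: every fibre of $\pi_\lambda$ disjoint from the singular locus of $X$ now has embedding dimension $\le 2$ at each of its points, so the theorem applies without the planarity restriction and yields the stated smoothness and codimension of the length-$k$ locus. I expect the only genuine care to be needed in the Schubert bookkeeping and in confirming that the codimension $3(c-\lambda+2)$ is uniform over $X_{\mathrm{sm}}$ (independent of $p$), so that bounding $\dim I$ by working over the smooth locus rather than all of $X$ costs nothing; granting that, the deduction from Theorem~\ref{line-projection-thm} is immediate.
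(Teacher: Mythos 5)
Your proposal is correct and takes essentially the same route as the paper: reduce to the planarity assertion, translate non-planarity of the fibre at a smooth point $p$ into the tangency condition $\dim(T_pX\cap\Lambda)\geq 2$, and rule this out for generic $\Lambda$ by a dimension count on the incidence variety, your Schubert codimension $3(c-\lambda+2)$ being exactly equivalent to the paper's bound $4n-6+(\lambda-2)(n+c-\lambda)$ and yielding the same inequality $\lambda<c+2-n/3$. You merely make explicit two points the paper leaves implicit (the modular-law identification of the fibre's Zariski tangent space with $\langle T_pX\cap\Lambda,\,p\rangle$, and the restriction of the count to the smooth locus), which is harmless.
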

 \begin{proof}[Proof of Corollary]  It suffices to prove
 the planarity assertion. If projection from $\Lambda$ has a fibre of embedding dimension
 3 or more at $x\in X$, then $\dim(\Lambda\cap T_xX)\geq 2$. An elementary dimension count shows that
 the locus of $\Lambda$ satisfying the latter incidence for some unspecified $x\in X$ is 
 of dimension at most
 \[4n-6+(\lambda-2)(n+c-\lambda),\] 
 which is less than $(\lambda+1)(n+c-\lambda)=\dim(\mathbb G(\lambda, n+c))$ provided
 $\lambda<c+2-n/3$.
 \begin{rem}\label{curvilin-proj-rem}
 Notations as above, a similar dimension count shows that whenever
 \eqspl{}{
 \lambda<\min(c, c+e-n/(e+1)),
 } any fibre of the generic projection of $X$ from a $\lambda$-plane either has embedding dimension
 at most $e$ or it meets
 the singular locus of $X$.
 \end{rem}
 
 \end{proof}
 \begin{cor}
 In the situation of Theorem \ref{line-projection-thm} or Corollary \ref{planar-cor},  if $X$ is smooth then\par
 (i)  for all $\lambda<\min(c, c+2-n/3)$, 
 the locus of fibres of $\pi_\lambda$ of 
 length exactly $k$ is smooth of codimension exactly  $k(c-\lambda-1)$ in $\P^{m-\lambda-1}$ or is empty; \par
 (ii)if $\lambda=1$, then the locus $X^1_k\subset\P^{m-2}$ of fibres of $\pi_1$ of length $k$ or more is of codimension $k(c-2)$
 or more, or is empty;\par
 (iii) if $\lambda=1$ and  $k(c-2)>m-2$, then $X^1_k$  is empty;
 \par (iv) if $n\leq 6$ and $\lambda<c$, then  $X^\lambda_k$  is  of codimension   $k(c-\lambda-1)$ 
 in $\P^{m-\lambda-1}$ 
 and smooth outside of $X^{\lambda}_{k+1}$.
 \end{cor}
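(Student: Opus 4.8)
The plan is to deduce all four parts from Theorem \ref{line-projection-thm} and Corollary \ref{planar-cor}, which already control the locus of fibres of length \emph{exactly} $k$; the work is to pass from exact length to length $\geq k$ and to verify the planarity hypotheses. Since $X$ is smooth, the caveat ``disjoint from the singular locus of $X$'' in those statements is vacuous, so they apply at every finite fibre of the stated embedding dimension.

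First I would record two reductions. (a) \emph{Planarity.} For part (i) the hypothesis $\lambda<\min(c,c+2-n/3)$ is exactly what Corollary \ref{planar-cor} needs; for part (iv), since $n\leq 6$ gives $n/3\leq 2$ and hence $c+2-n/3\geq c$, the hypothesis $\lambda<c$ already forces $\lambda<\min(c,c+2-n/3)$, so again all fibres are planar; for parts (ii)--(iii) with $\lambda=1$ planarity is automatic, as $X\cap L$ lies in the smooth surface $L\cong\P^2$. (b) \emph{Finiteness.} Because $\lambda<c$ the expected fibre dimension $\lambda+1-c$ is $\leq 0$, so a dimension count for a generic centre (of the type carried out in the proof of Corollary \ref{planar-cor}) shows that for generic $\Lambda$ every fibre of $\pi_\lambda$ is finite; this is what lets me treat $X^\lambda_k$ as a union of the locally closed strata $X^{\lambda,=j}=\{[L]:\ell(X\cap L)=j\}$, $j\geq k$, and is essential for the emptiness assertion (iii).

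Granting these, the core is a semicontinuity observation: the fibre length $\ell(X\cap L)$ is upper semicontinuous in $[L]$, so each $X^\lambda_k$ is closed, and near a point where $\ell=k$ exactly the set $X^\lambda_k$ coincides with the stratum $X^{\lambda,=k}$ (a neighbourhood of such a point is disjoint from the closed set $X^\lambda_{k+1}$). By Theorem \ref{line-projection-thm} and Corollary \ref{planar-cor} each nonempty $X^{\lambda,=j}$ is smooth of codimension $j(c-\lambda-1)$. Part (i) is then immediate: $X^{\lambda,=k}$ is smooth of the stated codimension at every point, or empty. For part (iv), $X^\lambda_k\setminus X^\lambda_{k+1}=X^{\lambda,=k}$ is open in $X^\lambda_k$ and smooth of codimension $k(c-\lambda-1)$, giving smoothness off $X^\lambda_{k+1}$; and since the remaining strata $X^{\lambda,=j}$, $j\geq k+1$, all have codimension $j(c-\lambda-1)>k(c-\lambda-1)$, the set $X^\lambda_{k+1}$ cannot raise the dimension, so $X^\lambda_k$ has codimension exactly $k(c-\lambda-1)$ (interpreting the empty case via the paper's dimension conventions). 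For part (ii), writing $X^1_k=\bigcup_{j\geq k}X^{1,=j}$ and taking the minimum of the codimensions $j(c-2)\geq k(c-2)$ gives $\codim X^1_k\geq k(c-2)$; and part (iii) then follows at once, since $k(c-2)>m-2=\dim\P^{m-2}$ would force $\codim X^1_k>\dim\P^{m-2}$, that is, $X^1_k=\emptyset$.

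The main obstacle I anticipate is reduction (b): one must be sure that positive-dimensional (``infinite-length'') fibres really are excluded for the generic centre when $\lambda<c$. This is crucial because an infinite fibre would lie in $X^\lambda_k$ for every $k$, and its presence with bounded codimension would contradict both the codimension bound in (ii) and the emptiness in (iii). The remedy is a dimension count in the incidence variety $\{(x,L):x\in X\cap L,\ \Lambda\subset L\}$ showing that the locus of $(\lambda+1)$-planes through a generic $\Lambda$ with $\dim(X\cap L)\geq 1$ is empty; everything else is bookkeeping with the stratification and the semicontinuity of fibre length.
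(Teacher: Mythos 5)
Your proposal is correct and takes essentially the same approach as the paper, whose own proof is just a four-line compression of your argument: (i) and (iv) follow from Corollary \ref{planar-cor} via the inequality \eqref{ell}, (ii) from automatic planarity plus the fact that fibres of greater length have greater codimension, and (iii) because that codimension exceeds $\dim\P^{m-2}$. The only inefficiency is your ``main obstacle'': finiteness of all fibres requires no incidence-variety dimension count, since a generic $\Lambda$ with $\lambda<c$ is disjoint from $X$, so every fibre $X\cap L$ is a projective scheme contained in the affine space $L\setminus\Lambda$ and hence is finite.
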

 \begin{proof}
(i) Trivial from the foregoing Corollary.\par
(ii) Planarity is automatic and the fibres of greater length have greater codimension.
\par (iii) The codimension is too big.\par 
(iv) Immediate from \eqref{ell}.
 \end{proof}
\subsection{Proof of Theorem \ref{line-projection-thm}} 
Our point of view again is that a fibre of $\pi_\lambda$ comes from 
 a (line, plane) pair $(\Lambda, L)$ which is so mobile
that $\Lambda$ is a generic $\P^\lambda$ in $\P^m$ (such a pair may be said
to be 'infinitesimally filling with respect to $\Lambda$'). The proof of
Theorem \ref{line-projection-thm}  is a simple consequence of the two lemmas that follow. Each of them is stated 
 in somewhat greater generality than is required. The hard part is dealing with the local complexity of the finite
 scheme $L\cap X$. 
 \begin{lem}\label{length-lem} Let $X\subset P$ be a locally closed embedding of smooth varieties
 of respective dimensions $\nu-c, \nu$.
 Let $L$ be a smooth closed subvariety of dimension $\lambda+1$ in $P$ meeting $X$ in a scheme $Z$ of 
 finite length  $k$ and embedding dimension 2 or less, so that $X$ is closed in a neighborhood of $L$.
  Let $N^{\mathrm s}\subset N_{L/P}$ be the secant subsheaf, parametrizing deformations of $L$
  preserving the length of $X\cap L$ (cf. \S \ref{non-excess-subsec}). 
  Then the colength of $N^{\mathrm s}$ in $N_{L/P}$ is $k(c-\lambda-1)$.
 \end{lem}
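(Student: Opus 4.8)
The plan is to reduce the assertion to a purely local statement on $Z$ and then to recognize the colength of $N^{\mathrm s}$ as the colength of the normal sheaf $N_{Z/L}$ in a free module, a quantity that Fogarty's theorem computes in the planar case. Since $N_{L/P}/N^{\mathrm s}$ is an $\O_Z$-module (Proposition \ref{non-excess-prop}), its length is the sum of the local lengths $\l_p(N_{L/P}/N^{\mathrm s})$ over $p\in\supp(Z)$, so I may work in a formal neighbourhood of a single $p$ with $\l_p(Z)=k_p$ and aim to prove $\l_p(N_{L/P}/N^{\mathrm s})=k_p(c-\lambda-1)$. Write $a=\lambda+1=\dim(L)$, choose coordinates so that $L=\{x_{a+1}=\cdots=x_\nu=0\}$, and let $\bar u_1,\dots,\bar u_c$ be the restrictions to $L$ of part of a regular system of parameters $u_1,\dots,u_c$ cutting out $X$, so that $\I_{Z/L}=\I_X\O_L=(\bar u_1,\dots,\bar u_c)$ and $\O_Z=\O_L/\I_{Z/L}$. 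The surjection $\O_L^{\,c}\twoheadrightarrow\I_{Z/L}$ embeds $N_{Z/L}=\Hom_{\O_L}(\I_{Z/L},\O_Z)$ into $\O_Z^{\,c}$.

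The first, and decisive, local step is the identity $\l_p(N_{L/P}/N^{\mathrm s})=\l_p(\O_Z^{\,c}/N_{Z/L})$. Deforming $L$ (with $X$ fixed) deforms the ideal $\I_{Z/L}$ inside $\O_L$: a normal field $\phi=\sum_{i>a}b_i\partial_{x_i}$ sends $\bar u_j$ to $\bar\delta_j=\overline{\sum_{i>a}(\partial u_j/\partial x_i)b_i}\in\O_Z$, and by the flatness description of the secant sheaf the condition $\phi\in N^{\mathrm s}$ is exactly that $(\bar\delta_j)_j$ lie in $N_{Z/L}\subseteq\O_Z^{\,c}$. Thus $N^{\mathrm s}/\I_ZN_{L/P}=\beta^{-1}(N_{Z/L})$ for the ``normal Jacobian'' map $\beta:N_{L/P}\otimes\O_Z\to\O_Z^{\,c}$, and $N_{L/P}/N^{\mathrm s}$ injects into $\O_Z^{\,c}/N_{Z/L}$ with image $(\im\beta+N_{Z/L})/N_{Z/L}$. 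Now the ``tangential Jacobian'' columns $(\overline{\partial u_j/\partial x_i})_j$ with $i\le a$ already lie in $N_{Z/L}$: differentiating a syzygy $\sum_jr_j\bar u_j=0$ by $\partial_{x_i}$ and reducing modulo $\I_{Z/L}$ gives $\sum_j\bar r_j\,\overline{\partial u_j/\partial x_i}=0$, which is precisely the defining condition of $N_{Z/L}$. Since $X$ is smooth the full Jacobian $(\overline{\partial u_j/\partial x_i})_{1\le i\le\nu}$ is surjective onto $\O_Z^{\,c}$, so $\im\beta+N_{Z/L}=\O_Z^{\,c}$ and hence $\l_p(N_{L/P}/N^{\mathrm s})=\l_p(\O_Z^{\,c}/N_{Z/L})=k_pc-\dim_\C N_{Z/L}$.

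It remains to compute $\dim_\C N_{Z/L}=ak_p$, and this is where planarity and Fogarty enter. Assuming $a\ge2$ (for $a=1$, $Z$ is curvilinear and this is contained in Proposition \ref{curvilin-prop}), the hypothesis that $Z$ has embedding dimension $\le2$ lets me choose a smooth surface germ $\Sigma$ with $Z\subset\Sigma\subset L$. By Fogarty's theorem \cite{fogarty} the Hilbert scheme of the smooth surface $\Sigma$ is smooth of dimension $2k_p$ at $[Z]$, i.e. $\dim_\C N_{Z/\Sigma}=2k_p$. Choosing the $x_j$ so that $\Sigma=\{x_3=\cdots=x_\nu=0\}$, the classes $\bar x_3,\dots,\bar x_a$ split off a free summand $N^{\vee}_{\Sigma/L}\otimes\O_Z=\O_Z^{\,a-2}$ of the conormal module, so the normal-bundle sequence $0\to N_{Z/\Sigma}\to N_{Z/L}\to N_{\Sigma/L}\otimes\O_Z\to0$ is (split) exact, giving $\dim_\C N_{Z/L}=2k_p+(a-2)k_p=ak_p$. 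Substituting, $\l_p(N_{L/P}/N^{\mathrm s})=k_pc-ak_p=k_p(c-a)=k_p(c-\lambda-1)$, and summing over $p$ yields the Lemma.

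The main obstacle is this last step. For a general finite $Z$ one would compute $\dim_\C N_{Z/L}$ from a locally free (Koszul) resolution, but a planar length-$k$ scheme need not be a local complete intersection — already a fat point $\O_Z=\C[x,y]/(x,y)^2$ is not — so no such uniform count is available, and for $\dim(L)\ge3$ the ambient Hilbert scheme is genuinely singular at $[Z]$, with tangent space possibly larger than $ak_p$. It is exactly the embedding-dimension-$\le2$ hypothesis that confines $Z$ to a smooth surface and thereby makes Fogarty's smoothness theorem applicable; this is the only place the planarity assumption is used, and the place where any attempt to push the argument to embedding dimension $3$ would break down.
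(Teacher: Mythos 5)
Your proof is correct, and it shares the paper's two pillars---localization at the points of $Z$, and Fogarty's theorem combined with the free-summand splitting available in embedding dimension $\leq 2$---but the reduction to Fogarty is genuinely different. The paper never introduces $N_{Z/L}$ or a Jacobian: it notes that $N^{\mathrm s}$ is the kernel of $\Hom(\I_L,\O_L)\to\Hom(\I_L\cap\I_X,\O_Z)$, that this map factors through the surjection onto $\Hom(\I_L,\O_Z)=N_{L/P}\otimes\O_Z$, of length $k(\nu-\lambda-1)$, and that by left-exactness of $\Hom$ the kernel of $\Hom(\I_L,\O_Z)\to\Hom(\I_L\cap\I_X,\O_Z)$ is $\Hom(\I_L/(\I_L\cap\I_X),\O_Z)=N_{Z/X}$; Fogarty, applied to the Hilbert scheme of points of $X$, gives $\l(N_{Z/X})=k\dim(X)$, so the colength is $k(\nu-\lambda-1)-k\dim(X)=k(c-\lambda-1)$. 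You pivot instead on $N_{Z/L}$: you embed it in $\O_Z^{c}$ by the generators $\bar u_j$, identify $N^{\mathrm s}$ as the Jacobian preimage $\beta\inv(N_{Z/L})$ (the one step where you lean on the flatness characterization of $N^{\mathrm s}$ from Proposition \ref{non-excess-prop}; it can also be verified directly by lifting syzygies of the $\bar u_j$ to elements of $\I_L\cap\I_X$), and prove $\im\beta+N_{Z/L}=\O_Z^c$ by differentiating syzygies and using smoothness of $X$, so that the colength becomes $kc-\dim_{\C}N_{Z/L}$, with Fogarty now applied to the Hilbert scheme of points of $L$. The trade-off: the paper's identification is formal and coordinate-free---its only nontrivial input is $\l(N_{Z/X})=k\dim(X)$, which it leaves as ``an easy consequence'' of Fogarty---whereas your route pays for the extra surjectivity step $\im\beta+N_{Z/L}=\O_Z^c$ but is fully explicit: it spells out the split sequence $0\to N_{Z/\Sigma}\to N_{Z/L}\to N_{\Sigma/L}\otimes\O_Z\to 0$ that the paper suppresses, and it exhibits $N_{L/P}/N^{\mathrm s}$ concretely as the quotient of $\O_Z^c$ by the tangent space $N_{Z/L}$ of the Hilbert scheme of $L$ at $[Z]$. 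Since embedding dimension is intrinsic to $Z$, ``planar in $L$'' and ``planar in $X$'' are the same hypothesis, so the two arguments consume it identically, and both break at embedding dimension $3$ for exactly the reason you give.
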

 \begin{proof}
 Working locally at a point $z$ of $Z$, is will suffice to
 prove that the local colength of $N^{\mathrm s}$ is $\l_z(Z)(c-2)$. Now $N^{\mathrm s}$ is the kernel of the natural map
 \[\Hom(\I_L, \O_L)\to \Hom(\I_L\cap I_X, \O_Z)\]
 which factors through the surjection  $\Hom(\I_L,\O_L)\to \Hom(\I_L, \O_Z)$.
 The latter clearly has length $\l_z(Z)(\nu-\lambda-1), \nu=\dim(P)$.
 Now we have an exact sequence
 \[0\to\Hom(\I_L/(\I_L\cap I_X), \O_Z)\to \Hom(I_L, \O_Z)\to \Hom(\I_L\cap I_X, \O_Z) \]
 and
 \[\Hom(\I_L/(\I_L\cap I_X), \O_Z)=\Hom(\I_Z/\I_X, \O_Z) \]
Now the latter has length $\l_z(Z)\dim(X)$ due to the fact that $Z$ is a smooth point of the Hilbert scheme of $X$,
since it has embedding dimension 2 (an easy consequence of Fogarty's theorem on smoothness of the Hilbert scheme
of a smooth surface, see \cite{fogarty} or  \cite{lehn-montreal}, Cor. 3.4). Therefore locally the image of 
 \[\Hom(\I_L, \O_Z)\to \Hom(\I_L\cap I_X, \O_Z)\]
 hence also that of 
  \[\Hom(\I_L, \O_L)\to \Hom(\I_L\cap I_X, \O_Z)\]
  is of length $\l_z(Z)(\nu-\lambda-1-\dim(X))=\l_z(Z)(c-\lambda-1)$.
 \end{proof}
 In the following Lemma we consider a sheaf $M$ on $L$ (notations as above)
 such that $M\otimes\O_\Lambda$ admits a natural map from $N_{\Lambda/\P^N}$. 
 We will say that
 '$H^0(M)$ moves infinitesimally with $\Lambda$' if, given any $v_1\in H^0(M)$, there exists
 $v_2\in H^0(N_{\Lambda/\P^\nu})$ having the same image in $H^0(M\otimes\O_\Lambda)$.
 \begin{lem}\label{vanishing-lem} 
 Let $X\subset \P^\nu$ be a smooth codimension-$c$ locally closed subvariety,
 and let $\Lambda\subset\P^\nu$ be a generic linear $\P^\lambda$.
  Suppose  $L$ is an arbitrary linear $\P^{\lambda+1}$ in $\P^N$ containing $\Lambda$ and 
  meeting $X$ in a scheme $Z$ of 
  finite length  $k$.
   Let $M\subset N_{L/\P^\nu}$ be any coherent  subsheaf of $\O_L$-modules such that the support of $N_{L/\P^\nu}/M$
   is disjoint from $\Lambda$ and such that $H^0(M)$ deforms 
   infinitesimally with $\Lambda$.
 Then we have $H^1(M(t))=0$ for all $t\geq -1$.
 \end{lem}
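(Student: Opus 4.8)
The plan is to restrict everything to the hyperplane $\Lambda\subset L$ and run a Castelnuovo--Mumford style descending induction, feeding in the geometric hypothesis only at the very bottom. First I would record the local structure: since $L\cong\P^{\lambda+1}$ is linearly embedded, $N_{L/\P^\nu}\cong\O_L(1)^{\oplus r}$ with $r=\nu-\lambda-1$, and since $N_{L/\P^\nu}/M$ is finite and supported off $\Lambda$, the sheaf $M$ coincides with this bundle on a neighborhood of $\Lambda$. In particular $M$ is torsion-free, the linear form $x_\Lambda$ cutting out $\Lambda$ in $L$ is a nonzerodivisor on $M$, and $M\otimes\O_\Lambda\cong\O_\Lambda(1)^{\oplus r}$ on $\Lambda\cong\P^\lambda$. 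Twisting the resulting structure sequence gives, for every $t$,
\[ 0\to M(t-1)\to M(t)\to M|_\Lambda(t)\to 0,\qquad M|_\Lambda(t)\cong\O_\Lambda(t+1)^{\oplus r}. \]

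Second, because $H^1(\P^\lambda,\O(t+1))=0$ for all $t\geq -1$, the long exact sequence of the above yields, for each $t\geq -1$, a short exact sequence
\[ 0\to\coker(\rho_t)\to H^1(M(t-1))\to H^1(M(t))\to 0, \]
where $\rho_t\colon H^0(M(t))\to H^0(M|_\Lambda(t))$ is the restriction map. Serre vanishing gives $H^1(M(t))=0$ for $t\gg 0$, so telescoping this identity downward shows that the conclusion $H^1(M(t))=0$ for all $t\geq -1$ is \emph{equivalent} to the surjectivity of $\rho_t$ for all $t\geq 0$. Moreover, since $M|_\Lambda\cong\O_\Lambda(1)^{\oplus r}$ is generated in degree zero, the multiplication map $H^0(M|_\Lambda)\otimes H^0(\O_\Lambda(t))\to H^0(M|_\Lambda(t))$ is onto for $t\geq 0$, and $H^0(\O_L(t))\to H^0(\O_\Lambda(t))$ is onto; the evident commutative multiplication square then reduces the surjectivity of every $\rho_t$ to the single surjectivity of $\rho_0\colon H^0(M)\to H^0(M|_\Lambda)$.

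Third, the crux, I would prove $\rho_0$ is surjective from the hypothesis that $H^0(M)$ moves infinitesimally with $\Lambda$ together with the genericity of $\Lambda$. The normal-bundle sequence of $\Lambda\subset L\subset\P^\nu$ produces a surjection $N_{\Lambda/\P^\nu}\to M\otimes\O_\Lambda$ with kernel $N_{\Lambda/L}\cong\O_\Lambda(1)$; as $H^1(\P^\lambda,\O(1))=0$, the induced map $H^0(N_{\Lambda/\P^\nu})\to H^0(M\otimes\O_\Lambda)$ is onto. Thus every section of $M|_\Lambda$ is realized by an infinitesimal motion of $\Lambda$, and the ``moves with $\Lambda$'' property (the infinitesimal filling input of Lemma \ref{filling-higher-lem}) matches these motions with global sections of $M$, forcing $\im(\rho_0)=H^0(M|_\Lambda)$. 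Feeding this base case back through the two reductions closes the argument.

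The main obstacle is exactly this last step: bridging the deformation-theoretic hypothesis to an honest lifting of sections across $\rho_0$. The delicate point is that genericity of $\Lambda$ must be used to ensure both that the infinitesimal motions of $\Lambda$ span all of $H^0(M|_\Lambda)$ and that each such motion is induced by a secancy-preserving deformation of $L$, i.e.\ comes from $H^0(M)$; everything above that base case is purely formal cohomology on projective space.
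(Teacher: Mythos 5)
Your proposal is correct and takes essentially the same route as the paper's own proof: both reduce the statement to surjectivity of the restriction $H^0(M)\to H^0(M\otimes\O_\Lambda)$, obtain that surjectivity by combining the ``moves infinitesimally with $\Lambda$'' hypothesis with the surjectivity of $H^0(N_{\Lambda/\P^\nu})\to H^0(M\otimes\O_\Lambda)$, propagate to higher twists using that $M\otimes\O_\Lambda\cong\O_\Lambda(1)^{\oplus(\nu-\lambda-1)}$ is generated in degree $0$, and conclude by a descending induction on twists anchored by Serre vanishing. Your write-up is merely a bit more explicit than the paper's (the nonzerodivisor/structure-sequence bookkeeping, and the normal-bundle sequence justifying the vertical surjectivity that the paper calls obvious), and you read the infinitesimal-motion hypothesis in exactly the direction the paper's own proof uses it.
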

 \begin{proof}
 By assumption,  $\Lambda$ moves as a generic $\P^\lambda$ of $\P^\nu$
 as $X$ is fixed,  (in particular $\Lambda\cap X=\emptyset$). 
 It follows first that $M\otimes\O_\Lambda=(\nu-\lambda-1)\O_\Lambda(1)$.
Our assumption that $H^0(M)$ deforms infinitesimally with $\Lambda$ means that 
  in the diagram
 \[\begin{matrix}
 &&H^0(N_{\Lambda/\P^\nu})\\
 &&\downarrow\\
 H^0(M)&\to&H^0(M\otimes\O_\Lambda)
 \end{matrix}
 \]
the image of the vertical map is contained in that of the horizontal map.
  Since the vertical map is obviously surjective, we conclude that the natural map
 \[H^0(M)\to H^0(M\otimes\O_\Lambda) \]
 is surjective. This implies that $H^1(M(-1))\to H^1(M)$ is injective 
 (in fact, an isomorphism, as $H^1(M\otimes\O_\Lambda)=0)$), and
 that $H^0(M(s))\to H^0(M(s)\otimes\O_\Lambda)$ is surjective for all $s\geq 0$,
 because the latter group is generated by $H^0(M\otimes\O_\Lambda)\otimes H^0(\O_\Lambda(s)))$.
 Consequently, $H^1(M(s-1))\to H^1(M(s))$ is injective for all $s\geq 0$, hence
 $H^1(M(s-1))=0$ for all $s\geq 0$. Actually, because $M\otimes\O_\Lambda=(\nu-\lambda-1)\O_\Lambda(1)$,
 it follows similarly  that $H^i(M(s-1))=0, \forall i>0, s\geq 0$.\end{proof}
 Now Theorem \ref{line-projection-thm} follows by putting together Lemma \ref{length-lem} for
 $L$ an $(\lambda+1)$- plane in $P=\P^m=\P^\nu$ and Lemma \ref{vanishing-lem} for  $M=N^{\mathrm s}$.
 Note $\Lambda$ being generic implies that the family $(L,\Lambda)$
 is filling, hence by Remark \ref{inf-filling-higher}, infinitesimally filling, with
 respect to $\Lambda$, which means precisely that
  $H^0(N^{\mathrm s})$ deforms infinitesimally with $\Lambda$, as
  in the hypothesis of Lemma \ref{vanishing-lem}.
 \par
 \begin{rem}
 It seems likely that a general locally planar secant $L\cap X$ of
 given length $k$ is actually reduced
 and perhaps even in uniform position or, if $X$ is non-degenerate, in general position
 within $L$. We have no proof of this. However  in the curvilinear case, $L\cap X$
 is reduced, as proved in the next section.
 \end{rem}
 \section{The curvilinear case}
In this section we will prove a (full, with contact conditions) generic projection
 theorem for centers of any dimension and fibres which are
\emph{curvilinear}. For $X,\Lambda$ as above, we denote by 
$X^\Lambda_{(k.)}$ the locus of (point images of) curvilinear fibres of type $(k.)$
disjoint from the singular locus of $X$, and let $X^ \lambda_{(k.)}$ denote 
$X^\Lambda_{(k.)}$ for $\Lambda$ generic of dimension $\lambda$.
  \begin{thm}\label{curvilinear-thm}
  Let $X\subset \P^m$ be an irreducible closed subvariety of codimension $c\geq 2$. Let
  \[\pi_\lambda:X\to\P^{m-\lambda-1}\] be the projection from a general $\lambda$-plane $\Lambda\subset\P^m$,
  $\lambda<c$.
  Then  for any $k_1+...+k_r=k\geq 1$, the  
  Thom-Boardman locus $X^\lambda_{(k.)}\subset \pp^{m-\lambda-1}$ of fibres of $\pi_\lambda$ of cycle type $(k_1,...,k_r)$ is smooth
  of codimension $k(c-\lambda)-r$ in $\P^{m-\lambda-1}$, in a neighborhood of any curvilinear fibre properly of cycle type
  $(k_1,...,k_r)$  that is disjoint from
   the singular locus of $X$.
    Moreover, for any partition $(l.)$ refining $(k.)$ , the singularities  of $X^\lambda_{(l.)}$ at $L$ are the same, up to a smooth factor,
    as those of the locus of divisors of type $(l.)$ on $\pp^1$ at a divisor properly of type $(k.)$, 
    and the normalization of $X^\lambda_{(l.)}$ is smooth.
    
  \end{thm}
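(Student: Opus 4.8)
The plan is to run the argument of \S\ref{proof} and \S\ref{proof}'s sequel (\S5) at once, with the \emph{contact} sheaf in place of the secant sheaf and with the curvilinear colength count of Proposition \ref{curvilin-prop} supplying the numerics. Fix a general $\lambda$-plane $\Lambda$; since $\lambda<c$, the generic such $\Lambda$ is disjoint from $X$. Let $L\cong\P^{\lambda+1}\supset\Lambda$ be a plane with $Z=L\cap X$ curvilinear, properly of type $(k.)$, and disjoint from $\mathrm{Sing}(X)$, so that $[L]\in X^\lambda_{(k.)}$. As in \S5 the key observation is that deformations of $L$ keeping $\Lambda\subset L$ correspond to sections of $N_{L/P}$ vanishing on the hyperplane $\Lambda\subset L$, i.e.\ to $H^0(N_{L/P}(-1))$, because $\mathcal{O}_L(-\Lambda)=\mathcal{O}_L(-1)$. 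Intersecting with the contact sheaf of Proposition \ref{non-excess-prop}, the germ of $X^\Lambda_{(k.)}$ at $L$ is controlled by the twisted subsheaf $N^{\ct}(-1)=N^{\ct}\otimes\mathcal{O}_L(-1)\subset N^{\ct}$ (the sections vanishing on $\Lambda$), with tangent space $H^0(N^{\ct}(-1))$ and obstructions in $H^1(N^{\ct}(-1))$; the twist does not disturb the finitely supported quotients since their support $Z$ is disjoint from $\Lambda$.

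First I would record the colength. By Proposition \ref{curvilin-prop} with $a=\lambda+1$ one has $\l(N_{L/P}/N^\rms)=k(c-\lambda-1)$ and $\l(N^\rms/N^{\ct})=k-r$, so $N_{L/P}/N^{\ct}$ has length $k(c-\lambda)-r$, unchanged after tensoring with the line bundle $\mathcal{O}_L(-1)$. Next comes the vanishing. Because $\Lambda$ is general, the family of pairs $(\Lambda,L)$ with $L\cap X$ of type $(k.)$ is filling with respect to $\Lambda$ (Definition \ref{inf-filling-higher}); by Lemma \ref{filling-higher-lem} it is infinitesimally filling with respect to $\Lambda$, which is exactly the assertion that $H^0(N^{\ct})$ deforms infinitesimally with $\Lambda$. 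Hence Lemma \ref{vanishing-lem} applies with $M=N^{\ct}$ (and, via the weaker length-$k$ filling, with $M=N^\rms$), giving $H^1(N^{\ct}(t))=H^1(N^\rms(t))=0$ for all $t\geq-1$. The case $t=-1$ is what we want: $H^1(N^{\ct}(-1))=0$ makes $X^\Lambda_{(k.)}$ smooth at $L$, and from $0\to N^{\ct}(-1)\to N_{L/P}(-1)\to Q\to 0$ with $N_{L/P}(-1)\cong(m-\lambda-1)\mathcal{O}_L$ and $\l(Q)=k(c-\lambda)-r$ I read off $h^0(N^{\ct}(-1))=(m-\lambda-1)-\bigl(k(c-\lambda)-r\bigr)$, i.e.\ the codimension in $\P^{m-\lambda-1}$ is $k(c-\lambda)-r$.

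For the refinement and singularity statement I would use the vanishing $H^1(N^\rms(-1))=0$, which makes the length-$k$ locus $X^\Lambda_k$ smooth at $L$ with tangent space $H^0(N^\rms(-1))$, and the induced classifying map $\kappa:X^\Lambda_k\to\Def(Z)$ to the versal deformation space. This $\kappa$ is smooth: its relative tangent sheaf is the kernel $N^{\ct}$ of $N^\rms\to T^1_Z$, twisted to $N^{\ct}(-1)$ in the fixed-$\Lambda$ setting, and $H^1(N^{\ct}(-1))=0$ gives smoothness. Since $X^\lambda_{(l.)}=\kappa\inv(D_{(l.)})$ for each refinement $(l.)$ of $(k.)$, the singularity of $X^\lambda_{(l.)}$ at $L$ coincides, up to a smooth factor, with that of $D_{(l.)}$ at $[Z]$; as $Z$ is curvilinear, $\Def(Z)$ is locally the space of degree-$k$ divisors on a smooth curve, equivalently on $\P^1$, and $D_{(l.)}$ is the type-$(l.)$ locus at a divisor properly of type $(k.)$. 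The smoothness of the normalization then follows from the curve computation of \S\ref{curvilin-sec}.

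The main obstacle is the verification of the infinitesimal-filling hypothesis for the \emph{contact} sheaf, namely that $H^0(N^{\ct})$ genuinely deforms infinitesimally with $\Lambda$: this is where the global genericity of $\Lambda$ must be converted, through the apparatus of \S\ref{fillers} and the uniformity principle, into the precise cohomological input demanded by Lemma \ref{vanishing-lem}. Once that is secured, the remaining steps — the colength bookkeeping via Proposition \ref{curvilin-prop}, the twist by $\mathcal{O}_L(-1)$ encoding the fixation of $\Lambda$, and the passage to refinements through the smooth classifying map — are routine given \S\ref{curvilin-sec} and \S\ref{secant-sheaf-sec}.
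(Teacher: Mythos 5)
Your proposal is correct and is essentially the paper's own argument: the paper proves Theorem \ref{curvilinear-thm} by declaring it ``essentially identical'' to the proof of Theorem \ref{point-projection-thm}, with Lemma \ref{vanishing-lem} (fed by the filling-with-respect-to-$\Lambda$ apparatus of \S\ref{fillers}, exactly as in the proof of Theorem \ref{line-projection-thm}) substituting for the $\P^1$ argument that the secant and contact sheaves split into line bundles of nonnegative degree. The details you supply --- the twist by $\O_L(-1)$ encoding $\Lambda\subset L$, the colength bookkeeping from Proposition \ref{curvilin-prop}, and the smooth classifying map to $\Def(Z)$ giving the refinement and normalization statements --- are precisely what the paper leaves implicit.
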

  \begin{rem}
  In case $\lambda=0$, curvilinearity is automatic and Theorem \ref{curvilinear-thm} reduces to the
  Gruson-Peskine theorem. 
  \end{rem}
\begin{proof}[Proof of Theorem]
The proof is essentially identical to that of Theorem \ref{point-projection-thm}, using Lemma
\ref{vanishing-lem} to substitute for  the appropriate secant and contact sheaves being direct sums
of line bundles of nonegative degrees.
\end{proof}

 \begin{cor}\label{curvilinear-cor}
 Notations as in Theorem \ref{curvilinear-thm}, assume
 \eqspl{ell-curvilinear}{
 \lambda<\min(c, c+1-n/2).
 }
Then for any $k_1+...+k_r=k\geq 1$, 
 the locus of fibres of $\pi_\lambda$ of proper cycle type $(k_1,...,k_r)$ is smooth
   of codimension $k(c-\lambda)-r$ in $\P^{N-\lambda-1}$, in a neighborhood of any  fibre properly of cycle type
   $(k_1,...,k_r)$  that is disjoint from
    the singular locus of $X$; also, all these fibres are curvilinear.
 \end{cor}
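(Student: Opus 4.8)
The plan is to deduce the corollary directly from Theorem~\ref{curvilinear-thm} by showing that the numerical hypothesis~\eqref{ell-curvilinear} forces every fibre of $\pi_\lambda$ disjoint from the singular locus to be curvilinear. Once this is known, the curvilinearity assumption appearing in the theorem is automatically satisfied and may simply be dropped, so that the corollary carries no new deformation-theoretic content.

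First I would observe that, granting that each fibre of $\pi_\lambda$ disjoint from $X_{\mathrm{sing}}$ has embedding dimension at most $1$, every such fibre that is properly of cycle type $(k_1,\dots,k_r)$ is a curvilinear fibre of that type; hence the locus of all fibres of proper cycle type $(k.)$ disjoint from $X_{\mathrm{sing}}$ coincides with the locus $X^\lambda_{(k.)}$ to which Theorem~\ref{curvilinear-thm} applies verbatim. The asserted smoothness, the codimension $k(c-\lambda)-r$, and the statements about refining partitions and normalizations are then immediate consequences of the theorem. The entire content of the corollary therefore reduces to the curvilinearity claim.

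For curvilinearity I would invoke Remark~\ref{curvilin-proj-rem} with $e=1$: its hypothesis $\lambda<\min(c,\,c+e-n/(e+1))$ specializes precisely to $\lambda<\min(c,\,c+1-n/2)$, which is~\eqref{ell-curvilinear}, and its conclusion is that any fibre of $\pi_\lambda$ either has embedding dimension at most $1$ or else meets $X_{\mathrm{sing}}$. To keep this self-contained I would carry out the $e=1$ case of the dimension count used for $e=2$ in the proof of Corollary~\ref{planar-cor}: a fibre at $x$ has embedding dimension $\ge 2$ exactly when the $\lambda$-plane $\Lambda$ meets the embedded tangent $n$-plane $T_xX$, i.e. $\dim(\Lambda\cap T_xX)\ge 1$. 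For fixed $x$ in the smooth locus the corresponding Schubert locus in $\mathbb{G}(\lambda,n+c)$ has codimension $2(c-\lambda+1)$, so the incidence locus of such pairs $(x,\Lambda)$ has dimension at most $n+(\lambda+1)(n+c-\lambda)-2(c-\lambda+1)$, which is strictly less than $\dim\mathbb{G}(\lambda,n+c)=(\lambda+1)(n+c-\lambda)$ exactly when $n<2(c-\lambda+1)$, i.e. when $\lambda<c+1-n/2$. Since $\Lambda$ is generic it avoids the image of this incidence locus in $\mathbb{G}(\lambda,n+c)$, so no fibre away from $X_{\mathrm{sing}}$ has embedding dimension $\ge 2$.

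The main obstacle is purely a matter of bookkeeping rather than of genuine difficulty: one must confirm that the Schubert-variety codimension count for the condition $\dim(\Lambda\cap T_xX)\ge 1$ indeed yields the stated bound $\lambda<c+1-n/2$, with the auxiliary inequality $\lambda<c$ guaranteeing that the fibres are finite. Everything beyond this count is a direct citation of Theorem~\ref{curvilinear-thm}.
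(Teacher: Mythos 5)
Your proposal is correct and is essentially the paper's own proof: the paper likewise deduces curvilinearity of all fibres away from the singular locus from Remark \ref{curvilin-proj-rem} (the $e=1$ case of the dimension count) under hypothesis \eqref{ell-curvilinear}, and then applies Theorem \ref{curvilinear-thm} directly. Your explicit Schubert computation (codimension $2(c-\lambda+1)$ for the condition $\dim(\Lambda\cap T_xX)\geq 1$, giving the threshold $\lambda<c+1-n/2$) just fills in the count that the paper's remark leaves implicit.
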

 \begin{proof}
 Under assumption \eqref{ell-curvilinear}, Remark \ref{curvilin-proj-rem}
 shows that all fibres of $\pi_\lambda$  disjoint from the singular locus of $X$
 are curvilinear, so Theorem \ref{curvilinear-thm} applies..
 \end{proof}
 In particular, for $X$ of dimension up to 4,
 all generic projections that are morphisms, i.e. with $\lambda<c$, have only curvilinear fibres
 and good Thom-Boardman loci.
  \begin{example}
  If $X$ is a smooth 3-fold in $\P^7$, its generic projection to $\P^5 $ has a smooth double curve and no triple points.
  If $X$ is a smooth 3-fold in $\P^6$, its generic projection to $\P^4$ has a double surface, which is smooth outside
  of a triple curve, which itself is smooth outside of a finite number of 4-fold points. In fact, these points are ordinary, i.e.
  come from reduced 4-tuples on $X$. This  follows from Corollary \ref{curvilinear-cor}.
  \end{example}
  \begin{rem}
  It seems likely that the 'least common multiple' of Theorems \ref{curvilinear-thm} and \ref{point-projection-thm},
  as well as the corresponding generalization of Theorem \ref{line-projection-thm}
  hold (curvilinear or locally planar intersections $L\cap X$ in an arbitrary ambient space, 
  $L$ isomorphic to a projective space). The precise formulation and proof will have to await
  another occasion.
  \end{rem}
  \vskip 1cm
  \bibliographystyle{amsplain}
  \bibliography{mybib}
\end{document}